\theoremstyle{plain}
\newtheorem{thm}{Theorem}
\newtheorem{lem}[thm]{Lemma}
\newtheorem{prop}[thm]{Proposition}
\newtheorem{corol}[thm]{Corollary}
\theoremstyle{definition}
\newtheorem{rem}{Remark}
\newcommand{\de}{\mathrel{\mathop:}\hspace*{-.6pt}=}
\newcommand{\wh}{\widehat}
\newcommand{\sew}{\,{\Join}\,}
\newcommand{\Orb}{\mathrm{Orb}}
\newcommand{\m}{\mathbf{m}}
\newcommand{\bq}{\mathbf{q}}
\newcommand{\br}{\mathbf{r}}
\newcommand{\bt}{\mathbf{t}}
\newcommand{\bx}{\mathbf{x}}
\newcommand{\by}{\mathbf{y}}
\newcommand{\bl}{\bm{\ell}}
\newcommand{\bgamma}{\boldsymbol{\gamma}}
\newcommand{\bgammai}{\boldsymbol{\gamma}_{\mathrm{i}}}
\newcommand{\vertex}{\bullet}
\newcommand{\edge}{%
	\begin{tikzpicture}
	\draw (0,0)--(.16,.08); \filldraw (0,0) circle (.2pt); \filldraw (.16,.08) circle (.2pt);
	\fill (.11,.055)--(.06,.055)--(.08,.015)--cycle;
	\end{tikzpicture}}
\newcommand{\iedge}{%
	\begin{tikzpicture}
	\draw (0,0)--(.16,.08); \filldraw (0,0) circle (.2pt); \filldraw (.16,.08) circle (.2pt);
	\fill (.11,.055)--(.06,.055)--(.08,.015)--cycle;
	\draw (.08,0)--(0.12,-0.04);\draw (.08,-0.04)--(0.12,0);
	\draw (0.024,0.072)--(0.064,0.112);\draw (0.024,0.112)--(0.064,0.072);
	\end{tikzpicture}}
\newcommand{\redge}{%
	\begin{tikzpicture}
	\draw (0,0)--(.16,.08); \filldraw (0,0) circle (.2pt); \filldraw (.16,.08) circle (.2pt);
	\fill (.11,.055)--(.06,.055)--(.08,.015)--cycle;
	\draw (.08,0)--(0.12,-0.04);\draw (.08,-0.04)--(0.12,0);
	\end{tikzpicture}}
\newcommand{\bedge}{\partial}
\newcommand{\xs}{\mathrm{x}}
\newcommand{\B}[1]{\scalebox{1.5}{#1}}
\newcommand{\cT}{\mathcal{T}}
\newcommand{\cTv}{\cT^{\vertex}}
\newcommand{\cTe}{\cT^{\edge}}
\newcommand{\cTbe}{\cT^{\bedge\edge}}
\newcommand{\cB}{\mathcal{B}}
\newcommand{\cBv}{\cB^{\vertex}}
\newcommand{\cBi}{\cB^{\iedge}}
\newcommand{\cBbi}{\cB^{\bedge\iedge}}
\newcommand{\cBbr}{\cB^{\bedge\redge}}
\newcommand{\cS}{\mathcal{S}}
\newcommand{\cSv}{\cS^{\vertex}}
\newcommand{\cSr}{\cS^{\redge}}
\newcommand{\cSbr}{\cS^{\bedge\redge}}
\newcommand{\cX}{\mathcal{X}}
\newcommand{\cXv}{\cX^{\vertex}}
\newcommand{\cXbr}{\cX^{\bedge\redge}}
\newcommand{\cXbi}{\cX^{\bedge\iedge}}
\newcommand{\cXx}{\cX^{\xs}}
\newcommand{\cXbx}{\cX^{\bedge \xs}}
\newcommand{\cQ}{\mathcal{Q}}
\newcommand{\cQf}{\cQ^{\triangle}}
\newcommand{\cQL}{\cQ^{\mathrm{L}}}
\newcommand{\cF}{\mathcal{F}}
\newcommand{\cG}{\mathcal{G}}
\newcommand{\cO}{\mathcal{O}}
\newcommand{\cY}{\mathcal{Y}}
\newcommand{\cZ}{\mathcal{Z}}
\newcommand{\rN}{\mathrm{N}}
\newcommand{\rS}{\mathrm{S}}
\newcommand{\Cat}{\mathrm{Cat}}
\newcommand{\Nar}{\mathrm{Nar}}
\definecolor{vert}{rgb}{0,0.666,0}
\definecolor{pole}{RGB}{184,113,255}
\definecolor{stepcol}{rgb}{0,.44,.22}
\newcommand{\unli}[1]{\underline{\phantom{\smash{\textbf{#1}}}}%
	%\llap{\textpdfrender{TextRenderingMode=FillStroke,StrokeColor=white,LineWidth=.6pt}{\textbf{#1}}}%
	\llap{\textbf{#1}}%
	}
\renewcommand\paragraph{\@startsection{paragraph}{4}{\z@}%
	{3.25ex \@plus1ex \@minus.2ex}%
	{-1em}%
	{\normalfont\normalsize\bfseries\unli}}
\title{Slit-slide-sew bijections for oriented planar maps}
\author{J\'er\'emie Bettinelli\thanks{CNRS \& Laboratoire d'Informatique de l'\'Ecole polytechnique. Partially supported by ANR-20-CE48-0018 \emph{3DMaps}, ANR-21-CE48-0007 \emph{IsOMa}, ANR-23-CE48-0018 \emph{CartesEtPlus}.}%
	\and %
	Éric Fusy\thanks{CNRS \& Laboratoire d'Informatique Gaspard Monge. Partially supported by ANR-20-CE48-0018 \emph{3DMaps}, ANR-23-CE48-0018 \emph{CartesEtPlus}.}%
	\and %
	Baptiste Louf\thanks{CNRS \& Institut de Math\'ematiques de Bordeaux. Partially supported by ANR-23-CE48-0018 \emph{CartesEtPlus}.}%
}
\let\oldemph\emph
\renewcommand{\emph}[1]{\textcolor{red!65!black}{\oldemph{#1}}}
\begin{document}
\maketitle

\begin{abstract}
We construct growth bijections for bipolar oriented planar maps and for Schnyder woods. These give direct combinatorial proofs of several counting identities for these objects.

Our method mainly uses two ingredients. First, a slit-slide-sew operation, which consists in slightly sliding a map along a well-chosen path. Second, the study of the orbits of natural rerooting operations on the considered classes of oriented maps.
\end{abstract}

%\tableofcontents

\section{Introduction}

\subsection{Growth bijections for trees and maps}

The main theme of this paper is that of \emph{growth bijections} for combinatorial structures: in simple words, given a combinatorial family $(\cF_{n})_{n\geq 0}$ indexed by a size parameter~$n$, a growth bijection is a procedure that bijectively constructs all the objects of~$\cF_{n}$ from those of~$\cF_{n-1}$, using only a slight modification. For this to be possible, the objects of the smaller size ($n-1$) bear some additional markings, and those of the larger size ($n$) also bear different additional markings. Such growth bijections are in general inspired by pre-existing simple recursive counting identities linking the cardinalities~$|\cF_{n}|$ and~$|\cF_{n-1}|$.

A famous instance of a growth bijection is \emph{R\'emy's bijection}~\cite{Rem85} (see also~\cite{Rod38}), which provides a proof of the identity
\begin{equation}\label{remeq}
(n+1)\,\Cat_{n}=2\,(2n-1)\,\Cat_{n-1},
\end{equation}
where $\Cat_n=\frac{1}{n+1}\binom{2n}{n}$ is the $n$-th Catalan number, which, in particular, counts binary trees on $n+1$ leaves (and thus with $2n+1$ edges). As a result, this identity~\eqref{remeq} can be interpreted as an equinumerosity between the sets of
\begin{itemize}
	\item binary trees on $n+1$ leaves with a marked leaf;
	\item binary trees on $n$ leaves with a marked edge, as well as a parameter either $\mathrm{left}$ or $\mathrm{right}$.
\end{itemize}
Consequently, there exists a bijection between these two sets and the idea is to find a simple constructive one, in the sense that the modification needed to pass from one set to the other one is as minimal as possible. In this example, the bijection is quite straightforward to find and we invite the reader unknowing of it to find it as a warm-up exercise.

\bigskip
For plane forests and planar maps, a family of growth bijections relying on \emph{slit-slide-sew} operations has been introduced by the first author~\cite{Bet14,Bet20,BeKo26}. Such an operation also appears (under the name ``cut-and-slide'') in a work by the third author~\cite{Lou19} that extends R\'emy's bijection to all planar maps, and in a work of Schabanel~\cite{Sch25} on bipartite maps via a bijection of Schaeffer~\cite{Sch97}. We must also mention that growth bijections have inspired \emph{probabilistic growth schemes} both for trees and planar maps~\cite{LuWi04,ABe14,Fle24}.

\subsection{Families of oriented maps}\label{secmifa}

In the present paper, we will focus on three specific classes of oriented planar maps, namely:
\begin{itemize}
	\item bipolar oriented quasi-triangulations;
	\item bipolar oriented maps;
	\item Schnyder woods.
\end{itemize}
We will show that slit-side-sew bijections can be successfully applied to these families, yielding a simple proof to a counting formula in each case (Propositions~\ref{propt}, \ref{propb} and~\ref{props}). However, contrary to previous bijections of this type, an additional ingredient is needed here: we will have to compute the probability that a given edge possesses a certain ``good'' property. This will be done bijectively by averaging over orbits of a natural rerooting operation.

\paragraph{Bipolar oriented maps.}
The considered maps are all specific classes of bipolar oriented maps; for Schnyder woods it reduces to such a class via a known easy bijection. First, a \emph{planar map} is an embedding of a finite connected graph (possibly with multiple edges and loops) into the sphere, considered up to orientation-preserving homeomorphisms. Then a \emph{bipolar oriented map} is a planar map whose edges are all oriented as follows. One oriented edge is distinguished and called the \emph{root edge}. Its tail and head are respectively called the \emph{South Pole} and \emph{North Pole}, and denoted by~$\rS$ and~$\rN$. The South Pole is a \emph{source}, which means that all the edges incident to it are \emph{outgoing}, that is, oriented away from it; the North Pole is a \emph{sink}, which means that all the edges incident to it are \emph{incoming}, that is, oriented toward it; every non-pole vertex is neither a source nor a sink, that is, is incident to at least one incoming and one outgoing edge. Finally, there are no directed cycles. See Figure~\ref{bom}.

The \emph{external face} is taken as the face on the left of the root edge. In figures, we will always draw the external face as the unbounded component of the plane, in white; we will use a yellowish coloring for internal faces. The vertices incident to the external face make up the \emph{boundary} of the map and are called \emph{external vertices}. The other vertices are called \emph{internal vertices}.

\begin{figure}[ht!]
	\centering\includegraphics[width=11cm]{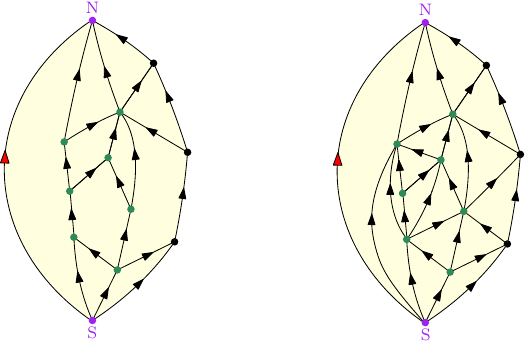}
	\caption{\textbf{Left.} A bipolar oriented map. The poles are in purple; the root is in red; the internal vertices are in green. \textbf{Right.} A bipolar oriented quasi-triangulation: all the internal faces have degree~$3$.}
	\label{bom}
\end{figure}

\paragraph{Bipolar oriented quasi-triangulations.}
A \emph{quasi-triangulation} is a rooted map where the external face has simple contour, of any degree larger than or equal to~$2$, and the internal faces all have degree $3$. A \emph{bipolar oriented quasi-triangulation} is a bipolar map whose underlying map is a quasi-triangulation.

\paragraph{Local rules.}
It is well known (see e.g.~\cite{FrOsRo95}) that a bipolar oriented map satisfies the following local properties, illustrated in Figure~\ref{bomrules}.
\begin{itemize}
	\item Every non-pole vertex has its incident edges partitioned into a nonempty group of consecutive incoming edges and a nonempty group of consecutive outgoing edges.
	\item Every internal face has its contour partitioned into two directed paths sharing the same extremities: a left lateral path (having the face on its right) and a right lateral path (having the face on its left). The \emph{left length} (resp.\ \emph{right length}) of an internal face is the length of its left (resp.\ right) lateral path.
	\item The path formed by the right outer boundary is a directed path from the source to the sink.
\end{itemize}
Moreover these local properties easily guarantee acyclicity, hence actually characterize bipolar oriented maps.

\begin{figure}[ht!]
	\centering
	\includegraphics[width=12cm]{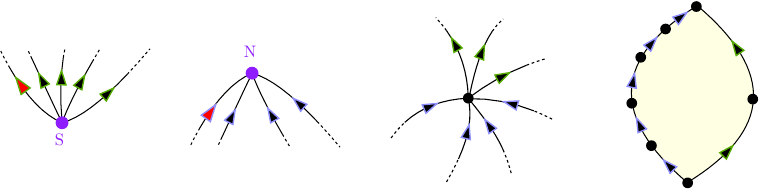}
	\caption{The local rules around the vertices and faces of a bipolar oriented map. Here, the right length of the face is~$2$.}
	\label{bomrules}
\end{figure}

\paragraph{Schnyder woods.}
We consider a triangulation~$\bt$, that is, a planar map whose faces are all of degree~$3$, with a distinguished face, drawn as the unbounded face in the plane. We denote by~$\rho_1$, $\rho_2$, $\rho_3$ the incident vertices, in clockwise order. A \emph{Schnyder wood}~\cite{Sch89} of~$\bt$ is a partition of its internal edges into three trees~$\bt_1$, $\bt_2$, $\bt_3$ satisfying the following two conditions. See Figure~\ref{schnyd}.
\begin{enumerate}[label=\textit{(\arabic*)}]
	\item For every $i\in\{1,2,3\}$, the tree~$\bt_i$ spans all the internal vertices and the external vertex~$\rho_{i}$, which is its root.
	\item\label{shnydrule2} If we orient the trees toward their roots, one has the local rule depicted on the right side of Figure~\ref{schnyd} around each internal vertex, namely: in clockwise direction, one always sees the outgoing edge of~$\bt_1$, the incoming edges of~$\bt_3$, the outgoing edge of~$\bt_2$, the incoming edges of~$\bt_1$, the outgoing edge of~$\bt_3$, and finally the incoming edges of~$\bt_2$. For each tree, the number of incoming edges might be null.
\end{enumerate}

\begin{figure}[ht!]
	\centering\includegraphics[height=6cm]{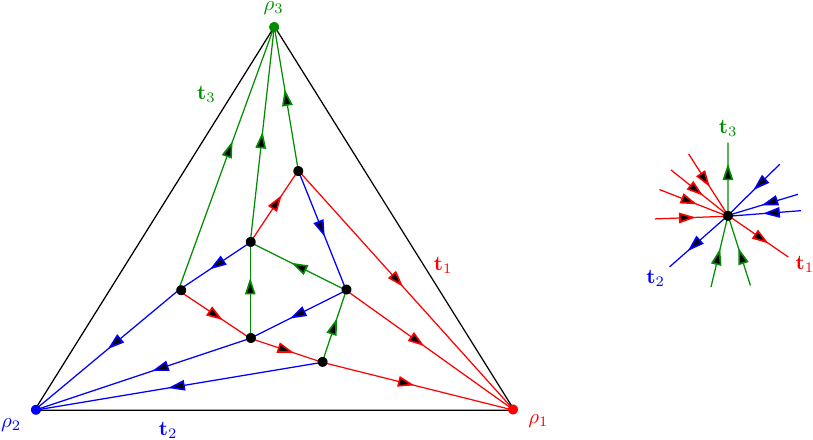}
	\caption{\textbf{Left.} A Schnyder wood on~$9$ vertices. The edges of three trees are oriented toward the respective tree roots. \textbf{Right.} The local rule around an internal vertex.}
	\label{schnyd}
\end{figure}

It is well known~\cite{FuPoSc09} that Schnyder woods are in bijection with bipolar oriented maps such that every internal face has right length~$2$. The bijection is recalled and illustrated in Figure~\ref{schnydbij}. 

\begin{figure}[ht!]
	\centering\includegraphics[height=6cm]{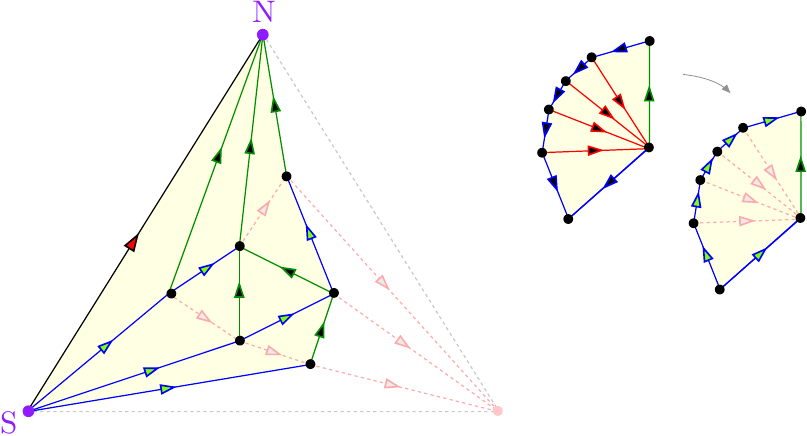}
	\caption{\textbf{Left.} To obtain the bipolar oriented map from the Schnyder wood, remove the edges of~$\bt_1$, the vertex~$\rho_1$ and the two external edges incident to~$\rho_1$, revert the edges of~$\bt_2$, set $\rS\de\rho_2$, $\rN\de\rho_3$, and the root as the remaining external edge, oriented from~$\rS$ to~$\rN$. Observe that the degree of~$\rho_1$ becomes the degree of the external face. \textbf{Right.} The internal vertices of the Schnyder wood naturally correspond to the faces of the bipolar oriented map and these all have right length~$2$.}
	\label{schnydbij}
\end{figure}

\subsection{Enumeration}\label{secenum}

\paragraph{Counting formulas.}
We consider the following counting coefficients:
\begin{itemize}
	\item the number $T_{k,j}$ of bipolar oriented \emph{quasi-triangulations} with~$k$ internal vertices and an external face of degree~$j$, for any $k\geq 0$, $j\geq 2$\,;
	\item the number $B_{k,\ell,j}$ of bipolar oriented \emph{maps} with~$k$ internal vertices, $\ell$ internal faces, and an external face of degree~$j$, for any $k\geq 0$, $\ell\geq 1$, $j\geq 2$\,;
	\item the number $S_{k,j}$ of Schnyder woods on $k+j+1$ vertices and such that~$\rho_1$ has degree~$j$, for any $k\geq 0$, $j\geq 3$. Equivalently, by the bijection of Figure~\ref{schnydbij}, $S_{k,j}$ is the number of bipolar oriented maps with~$k$ internal vertices, an external face of degree~$j$, and such that all internal faces have right length~$2$.
\end{itemize}

Note in particular that $B_{k,\ell,2}$ is the the number of bipolar oriented maps with $k+2$ vertices
and~$\ell$ faces (by merging the left boundary of length~$1$ to the root edge), and that $S_{k,3}$ is the number of Schnyder woods on $k+3$ vertices (by removing the degree~$3$ external vertex~$\rho_1$ and its incident edges, and then taking the incident internal vertex as the new vertex~$\rho_1$). 

These coefficients are given by the following explicit formulas:
\begin{align}
T_{k,j} 		& =j\,(j-1)\frac{(3k+2j-4)!}{k!\, (k+j-1)!\, (k+j)!}\,
				&\text{$k\geq 0$, $j\geq 2$\,;}\label{nbt}\\[2mm]
B_{k,\ell,j} 	& =j\,(j-1)\frac{(k+\ell-2)!\,(k+\ell+j-2)!\,(k+\ell+j-3)!}{k!\,(k+j)!\,(k+j-1)!\,\ell!\,(\ell-1)!\,(\ell-2)!}\,
				&\text{$k\geq 0$, $\ell\geq 1$, $j\geq 2$\,;}\label{nbb}\\[2mm]
S_{k,j} 		& =j\,(j-1)\,(j-2)\frac{(2k+2j-4)!\,(2k+j-3)!}{k!\,(k+j)!\, (k+j-1)!\, (k+j-2)!}\,
				&\text{$k\geq 0$, $j\geq 3$\,.}\label{nbs}
\end{align}

The first two formulas are obtained in~\cite{BMo11} via a recursive decomposition of bipolar oriented maps and an application of the obstinate kernel method. They can also be obtained from known bijections: in~\cite{Bor17,KeMiShWi19} for the first formula, and in~\cite{AlPo15,FuPoSc09} for the second one. The third one can be obtained from the bijection in~\cite{BeBo09}. See Section~\ref{sec:tableau} for more details on how these bijections yield the formulas.   

%These are obtained from the following results. The first one is given in~\cite[Cor~7]{KeMiShWi19}. More precisely, the bijection in~\cite{KeMiShWi19} ensures that $T_{k,j}$ is the number of tandem walks (i.e., with step-set $\{\mathrm{SE},\mathrm{W},\mathrm{N}\}$) in the quadrant, starting at the origin and ending at $(0,j-2)$. Several constructions (see e.g.~\cite{FuPoSc09} and~\cite{AlPo15}) ensure that $B_{k,\ell,j}$ is the number of non-crossing triples of lattice walks (with steps in $\{\mathrm{E},\mathrm{N}\}$) starting at the origin and ending at $(k+j-2,\ell-1)$ for the lower and middle walk, and at $(k,\ell-1)$ for the upper walk. Finaly, the bijection in~\cite{BeBo09} ensures that $S_{k,j}$ is the number of non-crossing pairs of Dyck walks of length $2(k+j-2)$, where the last descent of the upper walk has length $j-2$. 

\paragraph{Combinatorial identities.}
From the above formulas, we easily obtain the following ``growth identities'', for which we will give a bijective interpretation in the present work.

\begin{prop}[Bipolar oriented quasi-triangulations]\label{propt}
Let $k\ge 1$ and $j\ge 2$. Then
\begin{align}\label{eqt}
\mathllap{k\,T_{k,j}} &= \mathrlap{\left( 1-\frac{2}{j+1} \right)\big(3k+2j-4\big)\,T_{k-1,j+1}\,.}\hspace{55mm}
\end{align}
\end{prop}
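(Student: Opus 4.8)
The plan is to reformulate~\eqref{eqt} as an identity between cardinalities, establish its bijective part by a slit-slide-sew surgery, and obtain the non-integer factor $1-\frac2{j+1}$ from an averaging argument over orbits of a rerooting operation. Start with bookkeeping: by Euler's formula, a bipolar oriented quasi-triangulation with $k-1$ internal vertices and external face of degree $j+1$ has $k+j$ vertices, exactly $3(k-1)+2(j+1)-3=3k+2j-4$ edges, and $2(k-1)+(j+1)-2=2k+j-3$ internal faces. Since also $1-\frac2{j+1}=\frac{j-1}{j+1}$, identity~\eqref{eqt} is equivalent to
\[
(j+1)\,k\,T_{k,j} \;=\; (j-1)\,(3k+2j-4)\,T_{k-1,j+1}\,,
\]
where $k\,T_{k,j}$ counts bipolar oriented quasi-triangulations with $k$ internal vertices, external face of degree $j$, and a marked internal vertex, while $(3k+2j-4)\,T_{k-1,j+1}$ counts bipolar oriented quasi-triangulations with $k-1$ internal vertices, external face of degree $j+1$, and a marked edge. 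Comparing parameters, a mechanism going from the latter kind of object to the former should keep the total number of vertices equal to $k+j$, add one edge and one internal face, and convert one boundary vertex into an internal vertex (lowering the external degree from $j+1$ to $j$).

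The core step is to build such a \emph{slit-slide-sew} operation $\Phi$. Given a bipolar oriented quasi-triangulation $Q$ with $k-1$ internal vertices, external face of degree $j+1$, and a marked edge $e$, one slits $Q$ open along a well-chosen directed path associated with $e$ (e.g.\ a leftmost directed path from $e$ to the outer boundary), slides the two lips of the slit by one notch along the boundary, and sews them back, so that the vertex pushed inside becomes the marked internal vertex of the output. One then needs to (i) identify an explicit \emph{good}-edge condition on $e$ --- a local requirement near $e$ and near the boundary endpoint of the slitting path, excluding a bounded number of degenerate configurations --- under which the surgery again produces a bipolar oriented quasi-triangulation (acyclicity, triangular internal faces, simple outer contour, and the local vertex and face rules of Figure~\ref{bomrules} are all preserved), now with $k$ internal vertices and external face of degree $j$; and (ii) check that, restricted to pairs $(Q,e)$ with $e$ good, $\Phi$ is a bijection onto the set of bipolar oriented quasi-triangulations with $k$ internal vertices, external face of degree $j$, and a marked internal vertex, the inverse being the corresponding reverse slide. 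This yields
\[
k\,T_{k,j} \;=\; \sum_{Q}\ \#\{\text{good edges of }Q\}\,,
\]
the sum ranging over all bipolar oriented quasi-triangulations $Q$ with $k-1$ internal vertices and external face of degree $j+1$.

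It remains to show that this sum equals $\frac{j-1}{j+1}(3k+2j-4)\,T_{k-1,j+1}$, equivalently that a uniformly random edge of a uniformly random such quasi-triangulation is good with probability exactly $\frac{j-1}{j+1}$. Following the strategy announced in the introduction, I would introduce a natural \emph{rerooting} operation $R$ on this family --- moving the root along the outer boundary while re-adjusting the bipolar orientation in the forced way --- and prove that, over each orbit of $R$, the total number of good (quasi-triangulation, edge) pairs is exactly the fraction $\frac{j-1}{j+1}$ of the total number of (quasi-triangulation, edge) pairs, the two exceptional positions visited along an orbit being those attached to the two poles. Summing this orbitwise identity over all orbits gives the desired evaluation of $\sum_{Q}\#\{\text{good edges of }Q\}$, and combining it with the previous display proves~\eqref{eqt}.

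I expect the difficulty to be concentrated in the two verification steps rather than in the overall architecture: first, calibrating the \emph{good}-edge condition so that the slide simultaneously preserves every defining property of a bipolar oriented quasi-triangulation and admits the reverse slide with exactly the right target --- the degenerate boundary configurations that must be excluded are the delicate point; and second, pinning down the correct rerooting operation and turning the heuristic ``two exceptional positions $=$ two poles'' into a rigorous orbit-counting statement, which is the genuinely new ingredient here compared with earlier slit-slide-sew bijections.
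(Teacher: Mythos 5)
Your overall architecture coincides with the paper's (a slit-slide-sew surgery between marked-internal-vertex maps and marked-``good''-edge maps, plus an averaging argument over orbits of a rerooting operation), and your bookkeeping reduction of~\eqref{eqt} to the statement that a uniform edge is good with probability $\frac{j-1}{j+1}$ is correct. However, the two steps you defer are exactly where the content lies, and your descriptions of them would not go through as stated. First, the good-edge condition is \emph{not} a local requirement excluding a bounded number of degenerate configurations: in the paper it is the global property of being \emph{boundary-reaching} (the rightmost path from the edge to the North pole~$\rN$ meets the boundary strictly before~$\rN$), and the number of edges failing it varies from map to map --- which is precisely why no fixed per-map count is possible and an averaging argument is unavoidable. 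Second, for quasi-triangulations the slide changes the left length of the marked face by one, so the surgery as you sketch it cannot preserve the triangularity of all internal faces; the paper must first blow the marked edge into a degree-$2$ face before sliding (and, in the reverse direction, collapse the degree-$2$ face produced), and accordingly the marked edge in $\cT_{k-1,j+1}$ is only required to be boundary-reaching, not right-internal.

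The orbit statement is also not established by your heuristic. The rerooting is not a ``forced'' readjustment: after moving the root one step clockwise along the outer face, the new bipolar orientation is obtained by flipping exactly the edges that are not boundary-reaching (a nontrivial known bijection between the two sets of bipolar orientations of the same underlying map), and orbits can have length any multiple of the external degree. Your ``two exceptional positions attached to the two poles'' picture is valid only for an \emph{external} edge $\rho_i$, in the form: $\rho_i$ fails to be boundary-reaching exactly in the orbit elements rooted at $\vec\rho_i$ or $\vec\rho_{i-1}$. For internal edges there is no such direct description, and the paper's Lemma~\ref{lem:orbit} gets the $\frac{2}{j+1}$ failure proportion by a propagation argument: any two edges consecutive around a vertex have the same multiplicity, proved by tracking the extremal/lateral status of the corner between them along the orbit and showing that the ``special'' orbit elements alternate between two types. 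Without this propagation step (or a substitute), the identity $\sum_{Q}\#\{\text{good edges of }Q\}=\frac{j-1}{j+1}\,(3k+2j-4)\,T_{k-1,j+1}$ remains unproven; since you yourself flag both this and the calibration of the good-edge condition as open, the proposal is an outline of the right strategy rather than a proof.
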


\begin{prop}[Bipolar oriented maps]\label{propb}
Let $k\ge 1$, $\ell\ge 1$, and $j\ge 2$. Then the following identity holds:
\begin{align}\label{eqb}
\mathllap{k\,B_{k,\ell,j}} &= \mathrlap{\left( 1-\frac{2}{j+1} \right)\big(k+\ell-2\big)\,B_{k-1,\ell,j+1}\,.}\hspace{55mm}
\end{align}
\end{prop}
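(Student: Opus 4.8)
The plan is to prove \eqref{eqb} by a slit-slide-sew bijection supplemented with a probabilistic estimate computed by averaging over rerooting orbits --- the two ingredients announced in Section~\ref{secmifa}. First, by Euler's formula a bipolar oriented map with $k$ internal vertices, $\ell$ internal faces and external face of degree $j$ has $k+j$ vertices, $k+j+\ell-1$ edges and $k+\ell-1$ internal edges; replacing $(k,j)$ by $(k-1,j+1)$ keeps the numbers of vertices and edges unchanged but lowers the number of internal edges to $k+\ell-2$. So I would read the identity, in the form
\[
k\,B_{k,\ell,j}\;=\;\Bigl(1-\tfrac{2}{j+1}\Bigr)\,(k+\ell-2)\,B_{k-1,\ell,j+1},
\]
as a matching between bipolar oriented maps of type $(k,\ell,j)$ carrying a marked internal vertex (there are $k\,B_{k,\ell,j}$ of these) and pairs consisting of a bipolar oriented map of type $(k-1,\ell,j+1)$ together with a marked internal edge, the factor $1-\tfrac2{j+1}$ accounting for the probability that the marked internal edge is \emph{good} in a sense to be specified.

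The heart of the argument is the slit-slide-sew operation. Given a bipolar oriented map $M$ of type $(k,\ell,j)$ and a marked internal vertex $v$, I would single out a canonical directed path $P_v$ from $\rS$ to $\rN$ through $v$ --- for instance the concatenation of the leftmost directed path from $\rS$ to $v$ with the rightmost directed path from $v$ to $\rN$ --- then slit $M$ along $P_v$, slide the two sides of the slit past each other by one edge, and sew them back together. Using the local rules of Figure~\ref{bomrules}, one checks that the outcome $M'$ is again a bipolar oriented map, that its internal faces still number $\ell$, that $v$ has been pushed onto the outer boundary (so $M'$ has $k-1$ internal vertices and external face of degree $j+1$), and that a canonical internal edge $e$ of $M'$ --- the image of the edge along which the sewing was performed --- can be distinguished. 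Conversely, given $(M',e)$ one cuts along the canonical $\rS$-to-$\rN$ path through $e$, slides the other way, and sews, producing a new internal vertex and shrinking the external face by one edge; this inverse step is well defined precisely when $e$ is \emph{good}, and the first task is to show that the definition of good edge is exactly the condition making the two operations mutually inverse --- a careful but essentially routine verification of incidences around the affected vertices and faces. This gives a bijection between maps of type $(k,\ell,j)$ with a marked internal vertex and pairs $(M',e)$ with $e$ a good internal edge of a map of type $(k-1,\ell,j+1)$, hence
\[
k\,B_{k,\ell,j}\;=\;\sum_{M'}\#\{\text{good internal edges of }M'\},
\]
the sum ranging over bipolar oriented maps of type $(k-1,\ell,j+1)$.

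It remains to show that this sum equals $\bigl(1-\tfrac2{j+1}\bigr)(k+\ell-2)\,B_{k-1,\ell,j+1}$, i.e.\ that on average a fraction $1-\tfrac2{j+1}$ of the internal edges are good. Since this fraction is not pointwise constant, I would fix an internal edge $e$ together with the underlying unrooted data and let the root move along its orbit under a natural rerooting operation on bipolar oriented maps of type $(k-1,\ell,j+1)$, chosen so that its orbits have size $j+1$; the plan is then to prove that, within each orbit, $e$ is good for exactly $j-1$ of the $j+1$ rootings, the two bad rootings being those in which the canonical path attached to $e$ degenerates at one of the poles $\rS$, $\rN$. Summing over orbits yields the claimed average, and combined with the previous display this gives \eqref{eqb}. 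The main obstacle is precisely this last step: one must (i) find the right rerooting operation and show it is compatible with the assignment $e\mapsto P_e$, so that goodness can be tracked along an orbit, and (ii) pin down the notion of good edge so that it is simultaneously the obstruction to reversibility of the slit-slide-sew operation and has this clean orbit-average $\tfrac{j-1}{j+1}$. I expect the argument to run parallel to that of Proposition~\ref{propt}, with the combinatorics of general internal faces --- responsible for the factor $k+\ell-2$ here --- replacing that of triangular ones.
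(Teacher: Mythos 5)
Your two-ingredient strategy (a slit-slide-sew surgery plus an orbit-averaged rerooting argument) is indeed the paper's strategy, but both ingredients are left open exactly where the difficulties lie, and several specifics you do commit to would fail. For the surgery: the paper does not slit along a full directed path from $\rS$ to $\rN$ through $v$; it slits only along the \emph{internal part} of the rightmost path from $v$ to $\rN$, entering at the corner of $v$ inside the marked face and exiting through the external face (Section~\ref{secbijve}). This choice is what makes the operation work: the slit does not disconnect the map, sewing back with a one-unit shift pushes exactly one formerly internal edge onto the boundary (external degree $j\to j+1$, one internal vertex lost, number of internal faces and their right lengths preserved), and the distinguished edge of the output is automatically right-internal and \emph{boundary-reaching}, with external index increased by one. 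A pole-to-pole slit, both of whose endpoints lie on the boundary, cuts the disk into two pieces and leaves mismatches at both extremities after a shift by one, and gives no reason for the external degree to change by exactly one. Also, the ``good'' condition is precisely what you leave unspecified: in the paper it is ``boundary-reaching'' (the rightmost path from the edge meets the boundary strictly before $\rN$), the forward map always lands on such edges, and the inverse is defined on \emph{all} boundary-reaching internal edges (Theorem~\ref{thmsss}); there is no conditional reversibility left to analyze once this set is identified.

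The orbit step as you sketch it is also incorrect as stated. The rerooting operator acts on the set of all bipolar orientations of a fixed $2$-connected map rooted at any of its $j+1$ external edges, and its orbits have length a \emph{multiple} of $j+1$, not $j+1$ (Figure~\ref{orbit} shows an orbit of length $12$ with external degree $4$). More importantly, your plan to identify, per orbit, exactly two ``bad'' rootings for each edge as those ``degenerating at a pole'' only works for external edges, where the bad elements are the rootings at the edge itself and at its clockwise predecessor; for a general edge no such direct description is available, and the paper instead proves Lemma~\ref{lem:orbit} by propagation: two edges consecutive around a vertex have the same multiplicity of bad rootings, because along the orbit the elements where their statuses differ alternate between two local configurations. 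Without this propagation argument (or a substitute), the orbit average $1-\tfrac{2}{j+1}$ --- which, beyond the bijection, is the real content of the proposition --- remains unproved; this is a genuine gap, located exactly at the step you yourself flag as the main obstacle.
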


\begin{prop}[Schnyder woods]\label{props}
Let $k\ge 1$ and $j\ge 3$. Then the following identity holds:
\begin{align}\label{eqs}
\mathllap{k\,S_{k,j}} &= \mathrlap{\left( 1-\frac{3}{j+1} \right)\big(2k+j-3\big)\,S_{k-1,j+1}\,.}\hspace{55mm}
\end{align}
\end{prop}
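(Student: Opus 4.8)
The plan is to establish~\eqref{eqs} by a direct algebraic manipulation of the closed formula~\eqref{nbs}; the combinatorially meaningful slit-slide-sew proof is the real content of the paper and is deferred to the body, but for the proposition \emph{as stated} one only needs to verify a ratio of factorials.

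First I would evaluate $S_{k-1,j+1}$ from~\eqref{nbs}. The four factorial arguments simplify pleasantly: in the numerator $2(k-1)+2(j+1)-4 = 2k+2j-4$ and $2(k-1)+(j+1)-3 = 2k+j-4$, while the denominator becomes $(k-1)!\,(k+j)!\,(k+j-1)!\,(k+j-2)!$ — identical to the denominator of $S_{k,j}$ except that $k!$ is replaced by $(k-1)!$. Likewise the prefactor $j(j-1)(j-2)$ of $S_{k,j}$ becomes $(j+1)j(j-1)$.

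Next I would form the quotient $S_{k,j}/S_{k-1,j+1}$. The products $(2k+2j-4)!$ and $(k+j)!\,(k+j-1)!\,(k+j-2)!$ cancel entirely, leaving exactly three surviving contributions: the prefactor ratio $\tfrac{j(j-1)(j-2)}{(j+1)j(j-1)} = \tfrac{j-2}{j+1}$; the ratio $\tfrac{(2k+j-3)!}{(2k+j-4)!} = 2k+j-3$; and the ratio $\tfrac{(k-1)!}{k!} = \tfrac1k$. Hence $S_{k,j}/S_{k-1,j+1} = \tfrac{j-2}{j+1}\cdot\tfrac{2k+j-3}{k}$, and multiplying through by $k$ together with the identity $1-\tfrac{3}{j+1} = \tfrac{j-2}{j+1}$ yields precisely~\eqref{eqs}.

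The only genuine obstacle is bookkeeping: one must compute the shifted factorial arguments of $S_{k-1,j+1}$ without error and confirm that the cancellation is total; I expect nothing subtler than that. I would also remark that the very same computation proves Propositions~\ref{propt} and~\ref{propb}: there the prefactor ratio is $\tfrac{j-1}{j+1} = 1-\tfrac{2}{j+1}$, and the lone surviving numerator factor is $3k+2j-4$, respectively $k+\ell-2$.
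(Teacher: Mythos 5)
Your computation is correct: the shifted factorial arguments are $2k+2j-4$ and $2k+j-4$, the products $(2k+2j-4)!$ and $(k+j)!\,(k+j-1)!\,(k+j-2)!$ cancel completely, and the surviving ratio $\tfrac{j-2}{j+1}\cdot\tfrac{2k+j-3}{k}$ gives exactly~\eqref{eqs}; the same bookkeeping does indeed yield~\eqref{eqt} and~\eqref{eqb}. This is also precisely how the paper justifies the proposition at the point where it is stated: the identities are presented as following ``easily'' from the closed formulas \eqref{nbt}--\eqref{nbs}, which are imported from earlier work. The paper's real content, however, is an independent proof that never uses the closed formula: the slit-slide-sew bijection of Corollary~\ref{corsssbs} identifies $k\,S_{k,j}=|\cSv_{k,j}|$ with $|\cSbr_{k-1,j+1}|$, and the orbit-averaging argument for the rerooting operator on quasi-3-orientations (Proposition~\ref{proppb}, Equation~\eqref{eqratios}) shows that a proportion $1-\tfrac{3}{j+1}$ of the $(2k+j-3)\,S_{k-1,j+1}$ marked boundary-reaching candidates (right-internal edges) are actually boundary-reaching. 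That route buys two things your argument cannot: a combinatorial meaning for each factor in~\eqref{eqs}, and the ability to run the logic the other way --- the paper recovers~\eqref{nbs} from~\eqref{eqs} together with the base cases of Section~\ref{secbase}, which would be circular if~\eqref{eqs} were only known via~\eqref{nbs}. So your proof is a valid verification of the proposition as stated, provided the cited formula~\eqref{nbs} is taken as given, but it proves strictly less than the paper's bijective argument does.
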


\paragraph{Bijective interpretation.}
The three combinatorial identities~\eqref{eqt}, \eqref{eqb}, \eqref{eqs} all possess the same structure (seeing the third one as counting bipolar oriented maps with internal faces of right length $2$). The structure is as follows: 
\begin{itemize}
	\item On the left-hand side, there is the coefficient with parameters~$k$ and~$j$, together with the prefactor~$k$, which counts internal vertices.
	\item On the right-hand side, there is the coefficient with parameters $k-1$ and $j+1$, together with two prefactors:
	\begin{enumerate}[label=(\textit{\alph*})]
		\item\label{prefa} the first one has the form $\big( 1-\frac{\lambda}{j+1} \big)$, where $j+1$ is the external face degree;
		\item\label{prefb} the second one counts (most) edges.		
	\end{enumerate}
\end{itemize}
We will use a unified framework. To fit this framework, we will need to adopt some slightly different points of view depending on the class of maps into consideration. This bears the consequence that the prefactor~\ref{prefb} has a slightly different interpretation for each class.
More precisely, using Euler's characteristic formula, one easily obtains that the maps counted by $B_{k-1,\ell,j+1}$ all possess $k+j+\ell-1$ edges. Furthermore, in the particular case of quasi-triangulations, one obtains that the maps have $\ell=2k+j-3$ internal faces, and thus $3k+2j-4$ edges. In the case of bipolar oriented maps corresponding to Schnyder woods, the condition on the right lengths yields that the number of edges is $2\ell+1$, and thus $2k+2j-3$. As a result, the prefactor~\ref{prefb} counts
\begin{itemize}
	\item \textbf{all the edges} in~\eqref{eqt} on quasi-triangulations;
	\item all but $j+1$ edges, that is, \textbf{edges having internal faces on both sides} in~\eqref{eqb} on general bipolar oriented maps;
	\item all but~$j$ edges, that is, \textbf{edges having an internal face on their right} in~\eqref{eqs} on bipolar oriented maps corresponding to Schnyder woods.
\end{itemize}

The prefactor~\ref{prefa} will be seen as a probability that an edge from the class-specific proper set of edges satisfies an extra property called \emph{boundary-reaching} and whose definition will be given in Section~\ref{secedge}. It will be shown that the set of maps we consider can be partitioned into orbits of a rerooting operation, and that the probability given by~\ref{prefa} holds over each orbit. This is an instance of the so-called \emph{homomesy} phenomenon: see Remark~\ref{rem:homomesy}. 

Summing up, in the proper class of bipolar oriented maps, the left-hand side counts maps carrying a distinguished internal vertex and the right-hand side counts maps with the same number of vertices in total but one more on the boundary, carrying a distinguished edge satisfying proper constraints.

\subsection{Organization of the paper}

The aim of the present paper is to exhibit a bijective interpretation of the identities~\eqref{eqt}, \eqref{eqb}, \eqref{eqs}. We start by presenting in Section~\ref{secsss} our bijection and its specializations to the three specific classes of bipolar oriented maps we consider here. In Section~\ref{secbr}, we come back to the enumerative consequences of the bijections and compute the probabilities~\ref{prefa} that edges are boundary-reaching. 

Since our method provides alternate proofs to the identities~\eqref{eqt}, \eqref{eqb}, \eqref{eqs}, we also recover the counting formulas~\eqref{nbt}, \eqref{nbb}, \eqref{nbs}, from the easy to obtain initial conditions postponed in Section~\ref{secbase}.

\paragraph{Acknowledgments.} Special thanks are given to the funding project ANR-23-CE48-0018 \emph{CartesEtPlus} and its participants, since this work originated from its launching gathering. We also thank Cl\'ement Chenevi\`ere for pointing to the link with homomesy.

\section{Slit-slide-sew bijection}\label{secsss}

In this section, we only consider bipolar oriented maps. As a consequence, all the edges are oriented. For an edge~$e$, we respectively denote its tail and head by~$e^-$ and~$e^+$.

\paragraph{Paths.}
A \emph{path} from a vertex~$v$ to a vertex~$v'$ is a finite sequence $\bgamma=(e_1,e_2,\dots,e_k)$ of edges such that $e_1^-=v$, for $1\le i \le k-1$, $e_i^+=e_{i+1}^-$, and $e_k^+=v'$. %Its \emph{length} is the integer~$k$, which we denote by $[\bwp]\de k$. A path is called \emph{simple} if the vertices it visits are all distinct. 
Beware that a path is only made of edges oriented in the underlying orientation; they cannot be used ``backward''. Also, since we consider here acyclic orientations, paths are \emph{simple}, in the sense that the vertices they visit are all distinct.

Among all the paths from a vertex~$v$ to the North Pole~$\rN$, one will be of particular interest. Recall that, around a given non-pole vertex, the edges are partitioned into a consecutive group of incoming edges then a consecutive group of outgoing edges. The \emph{rightmost path} from~$v$ to~$\rN$ is the path that takes at each vertex the rightmost outgoing edge, that is, the first outgoing edge in counterclockwise order after the group of incoming edges. If~$v$ is the South Pole~$\rS$, then the first edge of the path is the one having the external face to its right. 
%Similarly, for an edge~$e$, the \emph{rightmost path} from~$e$ to~$\rN$ is the path made of~$e$ followed by the rightmost path from~$e^+$ to~$\rN$. 

Note that, as soon as the rightmost path~$\bgamma$ from a vertex~$v$ to~$\rN$ reaches the boundary of the map, it stays on it until the Pole~$\rN$. Thus, $\bgamma$ is made of an \emph{internal part}~$\bgammai$, followed (once the boundary is reached) by an \emph{external part}, which is included in the boundary. Clearly, the internal part has length~$0$ if and only if~$v$ is external. We call \emph{external index} with respect to~$v$ the length of the external part.

\subsection{The construction, from maps with a distinguished internal vertex}\label{secbijve}

Let~$\cB$ denote the set of bipolar oriented maps, and~$\cBv$ denote the set of bipolar oriented maps carrying a distinguished internal vertex, that is, the set of pairs $(\m,v)$, where $\m\in\cB$ and~$v$ is an internal vertex of~$\m$. For such a pair, the \emph{marked face} is defined as the internal face~$f$ at the right of~$v$, that is, the one at the right of the rightmost outgoing edge at~$v$. The \emph{external index}, denoted by~$\delta$, is the external index with respect to~$v$. 

Let $(\m,v)\in\cBv$. We break down the process into the following steps. See Figure~\ref{bij_Phi} (from top left to top right in counterclockwise order).

\begin{figure}[ht!]
	\centering\includegraphics[width=14.5cm]{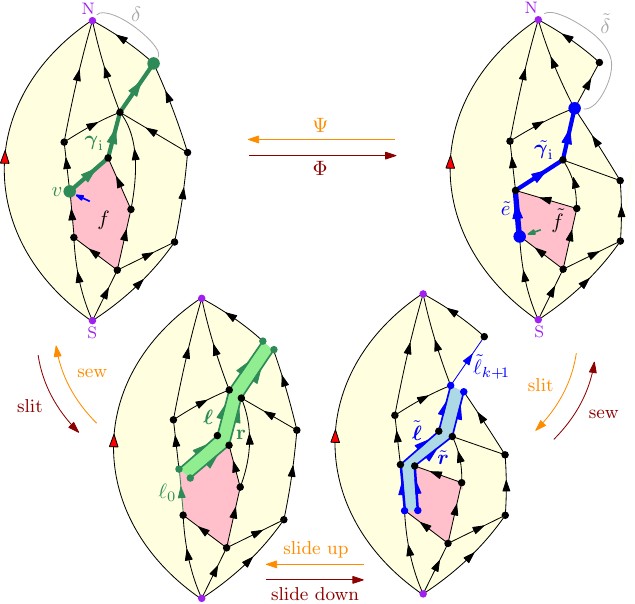}
	\caption{The slit-slide-sew bijections~$\Phi$, $\Psi$ between bipolar oriented maps with a marked internal vertex and bipolar oriented maps with a marked right-internal boundary-reaching edge. The little arrows show at which corner to enter in the slitting step. The marked face is highlighted in red and sees its left length decrease or increase by one.}
	\label{bij_Phi}
\end{figure}

\begin{description}[style=nextline]
\item[\textcolor{stepcol}{A. Sliding path}]
	We consider the internal part $\bgammai=(e_1,e_2,\dots,e_k)$ of the rightmost path from~$v$ to~$\rN$. We let~$\ell_0$ be the edge following~$e_1$ in clockwise order around~$v$. Since~$v$ is internal and~$e_1$ is the rightmost outgoing edge of~$v$, the edge~$\ell_0$ is incoming at~$v$, with the marked face~$f$ at its right. 

\item[\textcolor{stepcol}{B. Slitting, sliding, sewing}]
	We slit along~$\bgammai$, entering at~$v$ from the corner in~$f$ and exiting through the external face. This doubles the path~$\bgammai$, making up two copies: $\bl=(\ell_1,\dots,\ell_k)$ to the left, and $\br=(r_1,\dots,r_k)$ to the right.

	We then sew back~$\bl$ onto~$\br$ after \textbf{sliding down} the right side by one unit, in the sense that we match~$\ell_{i-1}$ with~$r_{i}$, for every $1\le i \le k$. For further reference, we denote by $\ell_{i-1}\sew r_{i}$ the resulting edge.

\item[\textcolor{stepcol}{C. Output}]
	In the resulting map~$\tilde\m$, we set $\tilde e\de \ell_0\sew r_1$. The output of the construction is the pair $\Phi(\m,v)\de(\tilde\m,\tilde e)$.
\end{description}

Before presenting the reverse bijection, let us first see what properties the output must satisfy.

\subsection{Edge constraints}\label{secedge}

An edge may satisfy the following properties, which are crucial for our purposes.

\paragraph{Boundary-reaching edge.}
An edge~$e$ in a bipolar oriented map~$\m$ is \emph{boundary-reaching} if the rightmost path from~$e^+$ to the North Pole~$\rN$ reaches the boundary of~$\m$ before~$\rN$, that is, if the external index with respect to~$e^+$ is strictly positive. Equivalently, by planarity, $e$ is \textbf{not} boundary-reaching if all the paths from~$e^+$ to~$\rN$ reach the boundary at~$\rN$. See Figure~\ref{bombr}.

\begin{figure}[ht!]
	\centering\includegraphics[height=6cm]{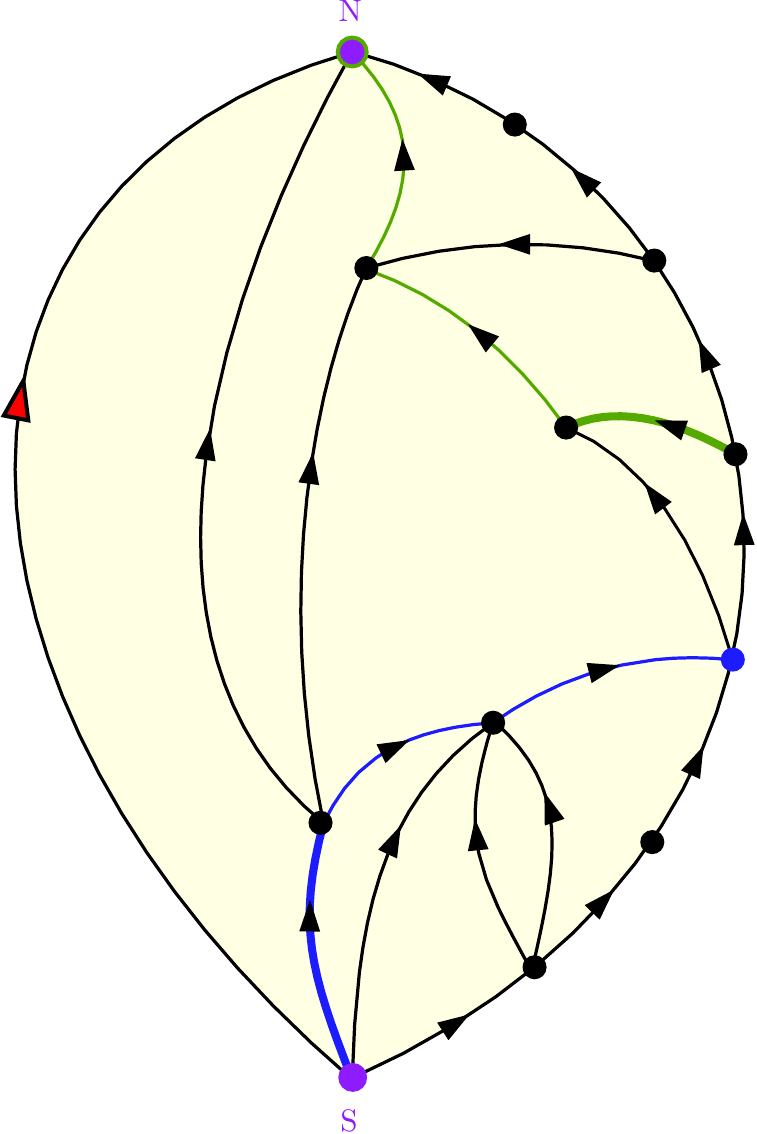}
	\caption{The thick blue edge is boundary-reaching whereas the thick green one is not. The rightmost paths from their ends to~$\rN$ reach the boundary at the vertices with corresponding color.}
	\label{bombr}
\end{figure}

\paragraph{Right-internal edge.}
An edge~$e$ in a bipolar oriented map~$\m$ is called \emph{right-internal} if the face to its right is an internal face. Note that, except in the trivial case of a map with no internal faces, the root edge is always right-internal. In fact, all the edges are right-internal, except those on the right part of the boundary.

\begin{prop}\label{meinBbr}
Let $(\m,v)\in\cBv$, of external index $\delta\geq 0$, and let $\Phi(\m,v)\de(\tilde\m,\tilde e)$. Then~$\tilde\m$ is a bipolar oriented map, with~$\tilde e$ a right-internal edge. Moreover, the external index of~$\tilde\m$ with respect to~$\tilde e^+$ is equal to $\delta+1$, so that, in particular, $\tilde e$ is boundary-reaching. 
\end{prop}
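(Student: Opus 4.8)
The plan is to verify three things in turn: (i) $\tilde\m$ is a bipolar oriented map, (ii) $\tilde e$ is right-internal, and (iii) the external index of $(\tilde\m,\tilde e)$ equals $\delta+1$, from which boundary-reachingness follows since $\delta+1>0$.

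First I would check (i) by using the local characterization of bipolar oriented maps recalled after Figure~\ref{bomrules} (partition of incident edges at each non-pole vertex into a nonempty incoming group then a nonempty outgoing group; each internal face split into a left and a right lateral directed path; right outer boundary a directed source-to-sink path). The slitting step doubles $\bgammai=(e_1,\dots,e_k)$ into $\bl$ and $\br$; the sliding-and-resewing matches $\ell_{i-1}$ with $r_i$. I would go vertex by vertex along the path. At $v$ itself, before the operation the incoming group contains $\ell_0$ as its last edge (just clockwise of $e_1$), and $e_1$ is the first outgoing edge; after the operation the corner in $f$ between $\ell_0$ and $e_1$ is opened and $\tilde e=\ell_0\sew r_1$ becomes an outgoing edge there — so the incoming/outgoing partition is preserved and both groups stay nonempty (the incoming group loses $\ell_0$ but, crucially, $v$ being internal, either it had another incoming edge, or one must argue the resulting vertex is fine; I expect the marked-face bookkeeping to handle the degenerate cases, the key point being that $v$ is internal so it is not forced to become a source). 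At each internal vertex $e_i^+=e_{i+1}^-$ along $\bgammai$ the picture is the standard ``slide'' move: the edge $\ell_i\sew r_{i+1}$ replaces the two half-edges and the cyclic order around the vertex is modified in a way that still respects the in-then-out partition — this is exactly the local verification already used in the earlier slit-slide-sew papers \cite{Bet14,Bet20} and I would import that reasoning. For faces, the only faces whose contour changes are the two faces bordering $\bgammai$ on its left and right at each step, plus the marked face $f$; the claim (highlighted in the caption of Figure~\ref{bij_Phi}) is that $f$ sees its left length increase or decrease by one while all other internal faces keep their left and right lateral path structure, and the external face stays the external face with its right boundary still a directed path. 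Acyclicity is then automatic from the local rules.

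Next, for (ii), I would locate $\tilde e=\ell_0\sew r_1$ in $\tilde\m$ and identify the face to its right. Since $r_1$ is the right copy of $e_1$ and in $\m$ the face to the right of $e_1$ along $\bgammai$ is an internal face (as $\bgammai$ is the internal part of the rightmost path, it lies strictly inside the map, so both its sides border internal faces — except possibly at the very end, but $e_1$ is not the last edge in that degenerate sense because its right side is $f$), the face to the right of $\tilde e$ is (a descendant of) that internal face; in the construction it is precisely the marked face $f$, which is internal by definition. Hence $\tilde e$ is right-internal.

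Finally, for (iii), I would trace the rightmost path from $\tilde e$ to $\rN$ in $\tilde\m$ and compare it with the rightmost path from $v$ to $\rN$ in $\m$. The point is that after sliding down by one unit, the rightmost outgoing choice at each vertex reached is shifted so that the rightmost path from $\tilde e$ follows the edges $\ell_0\sew r_1,\ \ell_1\sew r_2,\ \dots,\ \ell_{k-1}\sew r_k$ — i.e.\ it shadows $\bgammai$ one notch to the left — and then continues along whatever the rightmost path from $v$ did after $\bgammai$, namely the external part of length $\delta$. Because $\ell_0$ was incoming at $v$ with $f$ on its right, the first edge $\tilde e$ itself already has an internal face on its right and the shifted path does not hit the boundary until it reaches the external portion; the ``slit along $\bgammai$, exiting through the external face'' step inserts exactly one extra boundary edge compared to before, so the external part of the rightmost path from $\tilde e$ has length $\delta+1$. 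Therefore the external index of $(\tilde\m,\tilde e)$ is $\delta+1\ge 1$, so $\tilde e$ is boundary-reaching.

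The main obstacle I anticipate is the careful local verification in step (i) at the two ``endpoints'' of the sliding path — the marked vertex $v$ (where one incoming edge $\ell_0$ is, in effect, rotated to become outgoing $\tilde e$, and one must make sure the incoming group does not become empty, using internality of $v$) and the top vertex $e_k^+$ where $\bgammai$ meets either $\rN$ or the boundary (where the slit exits through the external face and one must check the external face and its directed right boundary are correctly restored). The interior-vertex slides and the face-length bookkeeping are routine and parallel to the earlier slit-slide-sew constructions, so I would state them as such and concentrate the written proof on these boundary cases and on the precise identification of the external part of the new rightmost path.
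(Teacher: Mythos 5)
Your overall strategy is the same as the paper's (verify the local characterization of bipolar oriented maps face by face and vertex by vertex, then track the rightmost path from $\tilde e$), but there is a genuine gap precisely at the step the paper identifies as the only nonstraightforward one. Both the preservation of the in-then-out vertex rule along the sliding path and the fact that the sewn path $(\ell_0\sew r_1,\dots,\ell_{k-1}\sew r_k)$ is a prefix of the rightmost path from $\tilde e$ in $\tilde\m$ rest on one observation that never appears in your write-up: because $\bgammai$ is a \emph{rightmost} path, the edges attached to its right side (those lying strictly between the incoming and the outgoing path edge at each vertex of $\bgammai$) are all incoming, so the sliding merely replaces, between an incoming and an outgoing edge, one possibly empty consecutive group of incoming edges by another such group. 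Deferring this to ``the local verification already used in \cite{Bet14,Bet20}'' does not work: those constructions concern maps without a bipolar orientation, and compatibility of the slide with the orientation is exactly the content of the present step, so there is nothing to import. The same omission undermines part (iii): that the new rightmost path ``shadows'' the old one is asserted, not proved (it needs the rightmost outgoing edge at each new vertex to remain the sewn one, i.e.\ the same incoming-group fact), and the ``one extra boundary edge'' should be identified: the sewn edges keep an internal face on each side, and it is precisely $\ell_k$ that becomes external and is transferred to the external part, giving index $\delta+1$.

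In addition, your local analysis at $v$ is incorrect as stated: after sewing, $\tilde e=\ell_0\sew r_1$ is outgoing at $\ell_0^-$ and \emph{incoming} at the vertex arising from $v$ (which also absorbs part of the right-hand copy of $e_1^+$); the incoming group there is not depleted, since $\ell_0$ simply becomes $\tilde e$, and the inserted edges are again all incoming. The worry that ``the incoming group loses $\ell_0$'' together with the appeal to unspecified ``marked-face bookkeeping to handle the degenerate cases'' shows the combinatorics of the sewing at the two endpoints has not actually been pinned down, which is what a proof of this proposition must do.
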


\begin{proof}
We rely on the characterization of bipolar oriented maps by the local rules given in Section~\ref{secmifa}. The local condition at any internal faces still holds, the left length and right length being preserved, except for the marked face~$f$, whose left length decreases by~$1$. Note in passing that~$f$ is at the right of~$\tilde e$, so that~$\tilde e$ is right-internal.  The local condition for the external boundary also still holds.

The only nonstraightforward property remaining to check for~$\tilde\m$ is that the local rules still hold at vertices along the sliding path. Note that the edges to the right of~$\bgammai$ are directed toward~$\bgammai$ (by definition of rightmost paths). Hence, for each vertex along~$\bgammai$, we replace, between an incoming and an outgoing edge, a (possibly empty) consecutive group of incoming edges with an other (possibly empty) consecutive group of incoming edges.

Besides concluding the proof that~$\tilde\m$ is a bipolar oriented map, this property also ensures that the (possibly empty, in the case where $k=1$) path $(\ell_1\sew r_2,\dots, \ell_{k-1}\sew r_k)$ is prefix to the rightmost path from~$\tilde e^+$ to~$\rN$ in $\tilde\m$. 
These edges preserve the property of having an internal face on each side, hence they form a path of internal edges in $\tilde\m$, ending at the origin of~$\ell_k$, which becomes an external edge after the sewing operation. Hence, $\tilde\bgammai:=(\ell_0\sew r_1,\dots, \ell_{k-1}\sew r_k)$ is made of~$\tilde e$ followed by the internal part of the rightmost path from~$\tilde e^+$ to~$\rN$, while the edge~$\ell_k$ has been transferred to its external part. The external index of~$\tilde\m$ with respect to~$\tilde e^+$ is thus $\delta+1$, as claimed.
\end{proof}

\subsection{The construction, from maps with a distinguished proper edge}\label{secbijev}

We now present the inverse construction. Let~$\cBbr$ denote the set of bipolar oriented maps carrying a distinguished boundary-reaching right-internal edge, and let $(\tilde\m,\tilde e)\in\cBbr$. For such a pair, the \emph{marked face} is the internal face~$\tilde f$ at the right of~$\tilde e$, and the \emph{external index}, denoted by~$\tilde\delta$, is the external index with respect to~$\tilde e^+$. Note that $\tilde\delta\geq 1$ since~$\tilde e$ is boundary-reaching. 

As above, we break down the process into~3 steps. See Figure~\ref{bij_Phi}, to be read from top right to top left in clockwise order.

\begin{description}[style=nextline]
\item[\textcolor{stepcol}{1. Sliding path}]
We consider the path $\tilde\bgammai=(\tilde e_1=\tilde e,\tilde e_2,\dots,\tilde e_k)$ consisting of~$\tilde e$ followed by the internal part of the rightmost path from~$\tilde e^+$ to~$\rN$.

\item[\textcolor{stepcol}{2. Slitting, sliding, sewing}]
	We slit along~$\tilde\bgammai$, entering at~$\tilde e^-$ from the corner in the marked face~$\tilde f$ and exiting through the external face. This doubles the path~$\tilde \bgammai$, making up two copies, one to the left, $\tilde \bl=(\tilde\ell_1,\dots,\tilde \ell_k)$, and one to the right, $\tilde\br=(\tilde r_1,\dots,\tilde r_k)$. We also let $\tilde\ell_{k+1}$ be the external edge whose origin is the end of~$\ell_k$ (this edge exists since $\tilde\delta\geq 1$).

	We then sew back~$\tilde\bl$ onto~$\tilde\br$ after \textbf{sliding up} the right side by one unit, in the sense that we match~$\tilde\ell_{i+1}$ with~$\tilde r_{i}$, for every $1\le i \le k$.

\item[\textcolor{stepcol}{3. Output}]
	In the resulting map~$\m$, we denote by~$v$ the vertex~$\tilde \ell_1^+=\tilde r_1^-$. The output of the construction is the pair $\Psi(\tilde\m,\tilde e)\de(\m,v)$.
\end{description}

\begin{prop}\label{mvinBv}
For any $(\tilde\m,\tilde e)\in\cBbr$, the output $(\m,v)=\Psi(\tilde\m,\tilde e)$ of the above construction is in~$\cBv$. Moreover, if $\tilde\delta\geq 1$ denotes the external index of $(\tilde\m,\tilde e)$, then the external index of $(\m,v)$ is $\tilde\delta -1$.
\end{prop}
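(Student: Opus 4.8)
The plan is to show that $\Psi$ is the inverse of $\Phi$, by verifying that the construction in Section~\ref{secbijev} undoes exactly the operation performed in Section~\ref{secbijve}, so that the conclusion of Proposition~\ref{mvinBv} follows essentially from Proposition~\ref{meinBbr} applied in reverse. Concretely, I would argue in two stages. First, I would verify directly that $(\m,v)=\Psi(\tilde\m,\tilde e)$ is a valid element of $\cBv$, by the same local-rules argument used in the proof of Proposition~\ref{meinBbr}: the slitting/sliding-up/sewing operation again preserves the local condition at every internal face (now the marked face $\tilde f$ has its left length \emph{increased} by~$1$ rather than decreased), preserves the condition along the right outer boundary, and at each vertex of $\tilde\bgammai$ it again replaces one consecutive group of incoming edges (those to the right of the sliding path, which point toward it) by another such group, hence the vertex local rule is maintained. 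One must also check that the distinguished vertex $v=\tilde\ell_1^+$ is indeed internal in $\m$: this holds because after sliding up by one unit the edge $\tilde\ell_{k+1}$ (which existed since $\tilde\delta\ge 1$) is sewn in at the top, so the former external edge $\tilde\ell_1$ becomes an internal edge incoming at $v$, giving $v$ both an incoming and an outgoing edge.

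Second, and this is the crux, I would show that the rightmost path from $v$ to $\rN$ in $\m$ has internal part exactly $(\tilde\ell_1\sew\tilde r_1,\dots,\tilde\ell_k\sew\tilde r_k)$ together with an external part obtained from the old external part by deleting its first edge. The key point is that the edges $\tilde\ell_i\sew\tilde r_i$ each have an internal face on both sides (so they are genuinely internal), while the sewing places the former external edge $\tilde\ell_{k+1}$ so that $\tilde\ell_k\sew\tilde r_k$ ends at a boundary vertex; and at $v$ the rightmost outgoing edge is $\tilde\ell_1\sew\tilde r_1$, with the marked face $\tilde f$ to its right — so the marked face of $(\m,v)$ is $\tilde f$, consistently with Proposition~\ref{meinBbr}. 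This identifies the internal part of the rightmost path from $v$ and shows the external index of $(\m,v)$ is $\tilde\delta-1$, proving the "moreover" claim.

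The main obstacle, and what deserves the most care, is the bookkeeping of \emph{which} edge becomes external and which becomes internal during the reglueing, i.e.\ checking that $\Psi\circ\Phi=\mathrm{id}$ and $\Phi\circ\Psi=\mathrm{id}$ at the level of the marked data (not just the underlying map). Here it is cleanest to observe that the "slit along $\bgammai$, entering from the marked-face corner, exiting through the external face" operation is literally the same cut in both directions, and the two sliding prescriptions $\ell_{i-1}\sew r_i$ and $\tilde\ell_{i+1}\sew\tilde r_i$ are inverse shifts of the gluing, so the composite reglues the slit map with a zero net shift, returning the original map; then the marked vertex $v$ in $\Psi(\Phi(\m,v))$ is recovered as $\tilde\ell_1^+=\ell_0^+\sew\ldots$, which is $v$ itself, and symmetrically $\tilde e$ in $\Phi(\Psi(\tilde\m,\tilde e))$ is recovered as $\ell_0\sew r_1=\tilde e$. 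Once this involution-style argument is in place, the external-index statement is immediate from Proposition~\ref{meinBbr}: if $(\m,v)$ has external index $\tilde\delta-1$ then $\Phi(\m,v)=(\tilde\m,\tilde e)$ has external index $\tilde\delta$, matching the hypothesis, and uniqueness of the preimage finishes the proof.
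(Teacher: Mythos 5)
Your overall plan (check the local rules, identify the sewn path as the internal part of the rightmost path from~$v$, read off the external index) is the paper's approach, but two points where you depart from it are genuinely wrong. First, your argument that $v$ is internal does not work: giving $v$ ``both an incoming and an outgoing edge'' only rules out the poles, since every non-pole \emph{external} vertex (on the right outer boundary) also has both. Moreover $\tilde\ell_1$ is not a ``former external edge'': it is the left copy of~$\tilde e$, which, being right-internal and boundary-reaching (hence neither the root nor a right-boundary edge), is an internal edge of~$\tilde\m$. Internality of~$v$ is the one nontrivial point here and it is a real effect of the shift: since the internal part of the rightmost path ends on the boundary, the vertex $\tilde e^+$ (of which $v$ is a copy) may perfectly well be external in~$\tilde\m$ (e.g.\ when $k=1$). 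The correct reason, which is the paper's, is that the marked internal face~$\tilde f$ occupies at~$v$ the corner between the incoming group (ending with~$\tilde\ell_1$) and the rightmost outgoing edge; as the external contour is just the root edge plus the right boundary, that corner is exactly where the external face would sit if~$v$ were a non-pole external vertex, so~$v$ must be internal.

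Second, your ``crux'' step is carried out with the wrong edges: the construction sews $\tilde\ell_{i+1}$ onto $\tilde r_i$, so the sewn path is $(\tilde\ell_2\sew\tilde r_1,\dots,\tilde\ell_{k+1}\sew\tilde r_k)$; the edges $\tilde\ell_i\sew\tilde r_i$ you work with do not exist (matching $\tilde\ell_i$ with $\tilde r_i$ is the zero shift, which merely restores~$\tilde\m$). The unsewn edge is~$\tilde\ell_1$, which plays the role of~$\ell_0$, and the former first external edge~$\tilde\ell_{k+1}$ is absorbed into the internal part of the rightmost path from~$v$ --- this absorption is precisely why the external index drops to $\tilde\delta-1$. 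You do quote the correct prescription $\tilde\ell_{i+1}\sew\tilde r_i$ later, so this is an internal inconsistency that must be repaired throughout. Finally, the $\Psi\circ\Phi=\mathrm{id}$, $\Phi\circ\Psi=\mathrm{id}$ discussion belongs to Theorem~\ref{thmsss} and is not needed for this proposition; in particular you cannot invoke Proposition~\ref{meinBbr} together with ``uniqueness of the preimage'' to get the external index before having established both that $(\m,v)\in\cBv$ and that $\Phi(\m,v)=(\tilde\m,\tilde e)$, whereas the direct observation above gives the index immediately.
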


\begin{proof}
Very similarly to the proof of Proposition~\ref{meinBbr}, the construction preserves the local condition of bipolar oriented maps (left and right lengths of internal faces are preserved, except for the left length of the marked face that increases by~$1$).

The sewn path is the internal part of the rightmost path from~$v$ to~$\rN$. Moreover, there is one less edge (the edge $\tilde\ell_{k+1}$) in the external part, in comparison with the rightmost path from~$\tilde e^+$ to~$\rN$ in the original map~$\tilde\m$. Note also that the marked face~$\tilde f$ becomes the face at the right of~$v$ in~$\m$, so that~$v$ has to be an internal vertex. 
\end{proof}

\subsection{Bijections and specializations}

\paragraph{Bijections.}
We now establish that the mappings~$\Phi$ and~$\Psi$ are indeed bijections, which are inverse to one another. 

\begin{thm}\label{thmsss}
The slit-slide-sew mappings $\Phi\colon \cBv \to \cBbr$ and $\Psi\colon \cBbr \to \cBv$ are inverse bijections. Furthermore, the mapping~$\Phi$
\begin{itemize}
	\item lets the external index and the external face degree increase by one,
	\item lets the number of internal vertices decrease by one,
	\item and preserves the number of internal faces, as well as their left and right lengths, except for the marked face, whose left length decreases by one.
\end{itemize}
\end{thm}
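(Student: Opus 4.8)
The plan is to deduce the theorem almost entirely from Propositions~\ref{meinBbr} and~\ref{mvinBv}, which already establish that $\Phi$ maps $\cBv$ into $\cBbr$, that $\Psi$ maps $\cBbr$ into $\cBv$, and that the external indices shift by $\pm 1$ under each map. What remains is to show that $\Psi\circ\Phi=\mathrm{id}_{\cBv}$ and $\Phi\circ\Psi=\mathrm{id}_{\cBbr}$, and then to read off the list of quantitative effects of $\Phi$ from the constructions and these two propositions.

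First I would verify $\Psi\circ\Phi=\mathrm{id}_{\cBv}$. Start from $(\m,v)\in\cBv$ with internal rightmost-path part $\bgammai=(e_1,\dots,e_k)$ and external index $\delta$, and set $(\tilde\m,\tilde e)=\Phi(\m,v)$. By the proof of Proposition~\ref{meinBbr}, the internal part of the rightmost path from $\tilde e$ to $\rN$ in $\tilde\m$ is exactly $\tilde\bgammai=(\ell_0\sew r_1,\dots,\ell_{k-1}\sew r_k)$, it has the same length $k$, and the edge $\ell_k$ has been transferred to the external part, which therefore has length $\delta+1\ge 1$; in particular $\tilde\ell_{k+1}$ in step~2 of $\Psi$ is precisely (the relevant copy of) $\ell_k$. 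Now apply $\Psi$: it slits along $\tilde\bgammai$ entering at $\tilde e^- $ from the corner in the marked face $\tilde f$ — which is the same physical corner/slit used by $\Phi$, since $\tilde f$ is the face to the right of $\tilde e$, i.e.\ the marked face $f$ — producing left and right copies, then re-sews after sliding the right side \emph{up} by one unit. Sliding down by one then up by one along the same doubled path returns the original gluing: the edge $\ell_{i-1}\sew r_i$ is un-split into $\ell_{i-1}$ on the left and $r_i$ on the right, and the match $\tilde\ell_{i+1}\sew\tilde r_i$ re-pairs $\ell_i$ with $r_i$, recovering the map $\m$ and the edge $\ell_0$ with head $\ell_0^+ = v$. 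Hence $\Psi(\Phi(\m,v))=(\m,v)$. I would then check $\Phi\circ\Psi=\mathrm{id}_{\cBbr}$ by the symmetric argument: starting from $(\tilde\m,\tilde e)\in\cBbr$, Proposition~\ref{mvinBv} identifies the sewn path as the internal part of the rightmost path from $v$ to $\rN$ in $\m=\Psi(\tilde\m,\tilde e)$, with one fewer external edge; applying $\Phi$ slits along that same path from the same corner (the marked face of $(\m,v)$ is the face to the right of $v$, which is $\tilde f$) and slides down by one, undoing the slide-up and restoring $(\tilde\m,\tilde e)$. Here the only mild subtlety is that the slit in $\Phi$ must exit through the external face along the edge $\ell_k$; but this is exactly the edge $\tilde\ell_{k+1}$ that $\Psi$ transferred into the internal part, so the two slitting operations traverse the same sequence of corners, and the compositions are honest inverses.

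With both compositions being identities, $\Phi$ and $\Psi$ are mutually inverse bijections. The three bullet points then follow: the external-index and external-degree increase by one is recorded in Proposition~\ref{meinBbr} (the external degree increases because $\ell_k$, previously internal, becomes an external edge after the sewing, adding one corner to the external face, as noted there); the decrease of the number of internal vertices by one is Proposition~\ref{mvinBv} read backwards, since $\Psi$ turns a marked right-internal edge into a \emph{new} internal vertex $v$ — equivalently, $\Phi$ consumes the internal vertex $v$, splitting it along the slit so it no longer remains internal; and the statement about internal faces is exactly what the local-rules analysis in the proof of Proposition~\ref{meinBbr} gives — all internal faces keep their left and right lengths except the marked face $f$, whose left lateral path loses the edge $\ell_0$ and hence whose left length drops by one, while no internal face is created or destroyed.

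The main obstacle, and the step deserving the most care, is the verification that the two slit-and-sew operations really are inverse as \emph{operations on embedded maps}, i.e.\ that $\Phi$ and $\Psi$ slit along corresponding paths starting at corresponding corners and that "slide down then slide up by one unit" is the identity on the doubled path. This is where one must be precise about which corner one enters at, which external edge plays the role of $\tilde\ell_{k+1}$ versus $\ell_k$, and the bookkeeping $\ell_{i-1}\sew r_i$ versus $\tilde\ell_{i+1}\sew\tilde r_i$; everything else is a direct invocation of Propositions~\ref{meinBbr} and~\ref{mvinBv}. Once the corner/path correspondence is pinned down, the inversion is a local, purely combinatorial check along the sliding path, with the rest of the map untouched.
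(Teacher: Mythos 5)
Your proposal is correct and follows essentially the same route as the paper: both rely on Propositions~\ref{meinBbr} and~\ref{mvinBv} for containment and the index shifts, identify that the sliding path (and entry corner) of $(\m,v)$ becomes that of $\Phi(\m,v)$, observe that sliding down and sliding up along the doubled path are inverse operations, and read the parameter changes directly off the construction. Your version merely spells out the bookkeeping (the role of $\ell_k$ versus $\tilde\ell_{k+1}$ and the matchings $\ell_{i-1}\sew r_i$ versus $\tilde\ell_{i+1}\sew\tilde r_i$) that the paper leaves implicit.
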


\begin{proof}
We already know from Propositions~\ref{meinBbr} and~\ref{mvinBv} that~$\Phi$ and~$\Psi$ take their values in the proper sets~$\cBbr$ and~$\cBv$.

We need to see that, for $(\m,v)\in \cBv$, we have $\Psi(\Phi(\m,v))=(\m,v)$. For this, notice that the sliding path of $(\m,v)$ becomes the sliding path of $\Phi(\m,v)$ through the mapping~$\Phi$ and that the operations of sliding up and sliding down are clearly inverse one from another. For the same reason, $\Phi\circ\Psi$ is the identity on~$\cBbr$.

Finally, the stated parameter correspondences of the mapping~$\Phi$ are also direct consequences of the construction.
\end{proof}

\paragraph{Specializations.}
We now state the specializations to the three families under consideration. We define the following subsets of~$\cB$:
\begin{itemize}
	\item the set $\cT_{k,j}$ of bipolar oriented quasi-triangulations with~$k$ internal vertices and an external face of degree~$j$ (so that $T_{k,j}=|\cT_{k,j}|$) for any $k\geq 0$, $j\geq 2$\,;
	\item the set $\cB_{k,\ell,j}$ of bipolar oriented maps with~$k$ internal vertices, $\ell$ internal faces, and an external face of degree~$j$ (so that $B_{k,\ell,j}=|\cB_{k,\ell,j}|$) for any $k\geq 0$, $\ell\geq 1$, $j\geq 2$\,;
	\item the set $\cS_{k,j}$ of bipolar oriented maps with~$k$ internal vertices, an external face of degree~$j$, and such that all internal faces have right length~$2$ (so that $S_{k,j}=|\cS_{k,j}|$) for any $k\geq 0$, $j\geq 3$.
\end{itemize}
Similarly as we did above for~$\cB$, we define for any of these subsets~$\cX$ two variations:
\begin{itemize}
	\item the set~$\cXv$ of pairs $(\m,v)$ where $\m\in\cX$ and~$v$ is an internal vertex of~$\m$;
	\item the set~$\cXbr$ of pairs $(\m,e)$ where $\m\in\cX$ and~$e$ is a boundary-reaching right-internal edge.
\end{itemize}

We readily obtain the following from Theorem~\ref{thmsss}.

\begin{corol}\label{corsssbs}
The slit-side-sew correspondence specializes into bijections between:
\begin{itemize}
	\item the set $\cBv_{k,\ell,j}$ and the set $\cBbr_{k-1,\ell,j+1}$ for any $k\geq 1$, $\ell\geq 1$, $j\geq 2$\,;
	\item the set $\cSv_{k,j}$ and the set $\cSbr_{k-1,j+1}$ for any $k\geq 1$, $j\geq 3$.
\end{itemize}	
\end{corol}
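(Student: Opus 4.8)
The plan is to deduce the corollary directly from Theorem~\ref{thmsss} by checking that the bijections $\Phi$ and $\Psi$ restrict to the three specified families, once the parameters are matched up as stated. By Theorem~\ref{thmsss}, $\Phi$ takes $(\m,v)\in\cBv$ to $(\tilde\m,\tilde e)\in\cBbr$, decreasing the number of internal vertices by one, increasing the external degree by one, preserving the number of internal faces, and preserving all left and right lengths of internal faces except that the marked face loses one unit of left length. So the only thing to verify is that each of the defining constraints of $\cT$, $\cB$, $\cS$ is compatible with these parameter changes, in both directions (since $\Psi=\Phi^{-1}$ automatically sends the image set back).

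First I would treat $\cB_{k,\ell,j}$, which is the easiest: the constraint is purely on the counts $(k,\ell,j)$ of internal vertices, internal faces, and external degree. Theorem~\ref{thmsss} says $\Phi$ sends these to $(k-1,\ell,j+1)$, and conversely $\Psi$ restores them; hence $\Phi$ restricts to a bijection $\cBv_{k,\ell,j}\to\cBbr_{k-1,\ell,j+1}$ and $\Psi$ to its inverse. Next, for $\cS_{k,j}$ the extra constraint is that every internal face has right length~$2$. Since $\Phi$ preserves the right length of every internal face (the marked face only changes its \emph{left} length), this constraint is preserved by $\Phi$; and since $\Psi$ likewise preserves all right lengths, it is preserved by $\Psi$ as well. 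Combined with the count changes $k\mapsto k-1$, $j\mapsto j+1$, this gives the bijection $\cSv_{k,j}\to\cSbr_{k-1,j+1}$. (One should note in passing that, as recorded in Section~\ref{secenum}, the class $\cS_{k,j}$ corresponds under the bijection of Figure~\ref{schnydbij} to the Schnyder-wood count $S_{k,j}$, so this statement is the one relevant to~\eqref{eqs}; but the corollary as stated is purely about bipolar oriented maps.)

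For quasi-triangulations, the constraint defining $\cT_{k,j}$ is that the external face has simple contour and every internal face has degree~$3$, i.e.\ left length plus right length equal to~$3$ for each internal face. Here the marked face is a subtlety: $\Phi$ decreases its left length by one, so if the marked face $f$ had left length~$1$ and right length~$2$ in $\m$, it would become a face of degree~$2$ in $\tilde\m$, violating the quasi-triangulation condition. I expect \textbf{this degenerate case to be the main obstacle}, and the way to handle it is to observe that it simply cannot occur: a marked face $f$ of left length~$1$ would mean the edge $\ell_0$ (the edge preceding $e_1$ clockwise around $v$, which is incoming at $v$ with $f$ on its right, in the notation of Step~A) together with the first edge of the left lateral path of $f$ form the entire left lateral path — but then the rightmost outgoing edge $e_1$ of $v$ would have to coincide with an edge of the right lateral path of $f$, forcing $v$ to be the bottom vertex of $f$ with all of $f$'s left side reduced to the single edge $\ell_0$; tracing the local rules around $v$, one checks that $v$ being internal is incompatible with $f$ having left length~$1$ and degree~$3$, or else that the resulting left length in $\tilde\m$ is still $\ge 1$ so that $\tilde f$ has degree~$\ge 3$, hence $=3$ since no degrees other than the external one can grow. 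Dually, applying $\Psi$ to $(\tilde\m,\tilde e)\in\cTbr_{k-1,j+1}$ increases the left length of the marked face by one, so one must check that the marked face in $\tilde\m$ has left length~$1$ (equivalently right length~$2$), which again follows from the local rule that a right-internal edge's marked face, combined with $\tilde e$ being the first edge of its rightmost path, pins down the geometry; and that the external contour stays simple, which holds because the slide operation only modifies the boundary by inserting/deleting boundary vertices along the external part of the rightmost path, none of which creates a repeated vertex on a quasi-triangulation's contour. Once this case analysis is dispatched, the bijection $\cTv_{k,j}\to\cTbr_{k-1,j+1}$ follows exactly as in the other two cases, and the corollary is complete.
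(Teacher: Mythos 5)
Your handling of the two cases that the corollary actually asserts is correct and coincides with the paper's own (one-line) argument: both bullets follow immediately from Theorem~\ref{thmsss}, since membership in $\cB_{k,\ell,j}$ is determined purely by the counts $(k,\ell,j)$, which $\Phi$ sends to $(k-1,\ell,j+1)$ and $\Psi$ restores, and membership in $\cS_{k,j}$ additionally requires every internal face to have right length~$2$, a quantity that both maps preserve.

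Your third paragraph, however, is both outside the scope of the statement (the corollary deliberately omits the class $\cT$) and mathematically wrong at its key step. In the construction, the marked face~$f$ is the internal face to the right of the rightmost outgoing edge~$e_1$ of~$v$, and~$\ell_0$ is incoming at~$v$ with~$f$ on its right; hence both~$\ell_0$ and~$e_1$ lie on the left lateral path of~$f$, which therefore has length at least~$2$ (this is exactly the observation used in Section~\ref{secbase}). Consequently, in a quasi-triangulation the marked face has left length exactly~$2$ and right length~$1$, and after applying~$\Phi$ its left length drops to~$1$, so it becomes a face of degree~$2$: the degenerate situation you claim ``simply cannot occur'' in fact occurs \emph{always}, and your fallback conclusion that the resulting face still has degree at least~$3$ is false. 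This is precisely why the paper treats quasi-triangulations separately in Corollary~\ref{corssst}, adding the shrinking step~D (and its inverse blowing step~0) and marking a boundary-reaching edge that is no longer required to be right-internal (the set~$\cTbe$). If you want your argument to cover~$\cT$, you must incorporate that extra step rather than argue the obstruction away.
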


\paragraph{Quasi-triangulations.}
Because of the drop of degree in the marked face (at the right of the distinguished vertex), we will need to perform an extra ``squeezing'' step for quasi-triangulations. As a result, the property that, via~$\Phi$, the distinguished edge is right-internal will no longer hold in this case; we rather define the set~$\cTbe$ (resp.\ $\cTbe_{k,j}$) as the set of pairs $(\m,e)$ where $\m\in\cT$ (resp.\ $\m\in\cT_{k,j}$) and~$e$ is a boundary-reaching edge of~$\m$.

We then amend the bijections~$\Phi$ and~$\Psi$ as follows. Let $(\m,v)\in\cTv_{k,j}$ and $(\tilde\m,\tilde e)\de\Phi(\m,v)$. We add an extra step \textbf{at the end}:

\begin{description}[style=nextline]
\item[\textcolor{stepcol}{D. Shrinking the degree 2-face}]
	The face~$\tilde f$ at the right of~$\tilde e$ is a degree 2-face. Let~$\hat e$ be the edge on the right of~$\tilde f$. Note that it has same origin and end as~$\tilde e$. We collapse~$\tilde f$ by sewing~$\hat e$ onto~$\tilde e$, and denote by~$\widehat\m$ the resulting map. We set $\smash{\widehat\Phi}(\m,v)\de(\widehat\m,\hat e\sew\tilde e)$.
\end{description}
Conversely, we take $(\widehat\m,\hat e)\in\cTbe_{k-1,j+1}$ and add an extra step \textbf{at the beginning}:
\begin{description}[style=nextline]
\item[\textcolor{stepcol}{0. Blowing the distinguished edge}]
	We replace the distinguished edge~$\hat e$ with an extra degree 2-face by doubling the edge, while keeping the orientation of the doubled edge. Among the two edges resulting from the doubling of~$\hat e$, we let~$\tilde e$ be the one having the extra degree 2-face to its right. We then let~$\tilde\m$ be the resulting map and set $\smash{\widehat\Psi}(\widehat\m,\hat e)\de \Psi(\tilde\m,\tilde e)$.
\end{description}

Since these two extra steps are clearly inverse one from another, we obtain the following.

\begin{corol}\label{corssst}
The slit-side-sew correspondence extends into bijections $\hat\Phi\colon \cTv \to \cTbe$ and $\hat\Psi\colon \cTbe \to \cTv$, inverse one from another, which, for $k\geq 1$, $j\geq 2$, specialize into bijections between~$\cTv_{k,j}$ and~$\cTbe_{k-1,j+1}$.
\end{corol}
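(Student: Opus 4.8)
The plan is to leverage Theorem~\ref{thmsss} and Corollary~\ref{corsssbs} as a black box, so that the only thing left to verify is that the two extra steps~0 and~D are well-defined on the relevant sets and are mutually inverse. First I would check that step~D is well-defined: for $(\m,v)\in\cTv_{k,j}$, Proposition~\ref{meinBbr} tells us $\tilde e$ is right-internal and boundary-reaching, and by Theorem~\ref{thmsss} the marked face~$\tilde f$ (which lies to the right of~$\tilde e$) has its left length decreased by one. Since~$\m$ is a quasi-triangulation all its internal faces have degree~$3$, i.e.\ left length $2$ and right length $1$, or left length $1$ and right length $2$; one must observe that the marked face is one whose left length was $2$, so after the decrease~$\tilde f$ has left length~$1$ and right length~$1$, hence is a digon. (If its left length had been~$1$, the drop would make it a loop, which cannot happen — so this is a point to argue, presumably via the local rule that the edge~$\ell_0$ incoming at~$v$ with~$f$ on its right forces left length $\ge 2$ for~$f$; this is exactly the content noted in Step~A.) Collapsing~$\tilde f$ by identifying~$\hat e$ with~$\tilde e$ then yields a map~$\widehat\m$ all of whose internal faces have degree~$3$, with external face of degree $j+1$ (unchanged by the collapse since~$\tilde f$ is internal), and $k-1$ internal vertices; so $\widehat\m\in\cT_{k-1,j+1}$, and one checks the marked edge $\hat e\sew\tilde e$ is still boundary-reaching — its rightmost path to~$\rN$ is unaffected by collapsing a digon sitting on its right — giving $(\widehat\m,\hat e\sew\tilde e)\in\cTbe_{k-1,j+1}$.

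Next I would check step~0 is well-defined on $\cTbe_{k-1,j+1}$: doubling the boundary-reaching edge~$\hat e$ creates an internal digon~$\tilde f$, which is a legal internal face of left and right length~$1$; the resulting map~$\tilde\m$ has $k-1$ internal vertices, external face still of degree $j+1$, and is bipolar oriented (acyclicity and the local rules are trivially preserved by doubling an edge). The edge~$\tilde e$ chosen with~$\tilde f$ on its right is right-internal by construction, and it is still boundary-reaching since its rightmost path to~$\rN$ is the same as that of~$\hat e$; so $(\tilde\m,\tilde e)\in\cBbr$ and $\Psi$ applies. The one subtlety is that~$\tilde\m$ is \emph{not} a quasi-triangulation (it has the spurious digon), which is precisely why we cannot state this in terms of $\cTbr$ and must route through the ambient $\cBv$, $\cBbr$; but $\Psi(\tilde\m,\tilde e)$ lands in $\cBv$, and one must confirm it lands in $\cTv_{k,j}$. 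For that, run $\Psi$ and observe from the proof of Proposition~\ref{mvinBv} that the marked face $\tilde f$ gets its left length increased by one, turning the digon back into a degree-$3$ face, while every other internal face keeps degree~$3$; hence the output map is in $\cT_{k,j}$, with $k$ internal vertices and external face degree~$j$.

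Finally, the mutual-inverse statement: since step~0 (doubling~$\hat e$, marking the copy with the digon on its right) and step~D (collapsing the digon to the right of~$\tilde e$) are visibly inverse local surgeries, and since $\Phi,\Psi$ are inverse bijections by Theorem~\ref{thmsss}, the composites $\widehat\Psi\circ\widehat\Phi = (\text{0}^{-1})\circ\Psi\circ\Phi\circ(\text{D}^{-1})$ — more precisely $\widehat\Psi\circ\widehat\Phi$ on $\cTv$ first does $\Phi$ then D, then the inverse of D (which is step~0) then $\Psi$, so the D/0 pair cancels and $\Psi\circ\Phi=\mathrm{id}$ remains; symmetrically for $\widehat\Phi\circ\widehat\Psi$ on $\cTbe$. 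The parameter tracking ($k\mapsto k-1$, $j\mapsto j+1$) is inherited from Theorem~\ref{thmsss} together with the fact that D removes one internal vertex's worth of nothing — it removes one \emph{edge} and merges two faces into the boundary-untouched picture — wait, more carefully: D decreases the edge count by one and the internal-face count by one but leaves internal-vertex count and external degree alone, matching the fact that $\Phi$ already did the $k\to k-1$, $j\to j+1$ shift. The main obstacle I anticipate is purely bookkeeping: making sure that in step~D the marked face is genuinely a digon (not a loop) and that ``boundary-reaching'' is stable under collapsing/creating the digon — both reduce to noting that the rightmost path out of the marked edge never uses the edge being collapsed or its doubled copy, since that edge sits on the \emph{right} of the marked face while rightmost paths exit on the left-outgoing side.
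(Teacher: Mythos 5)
Your proposal is correct and follows essentially the same route as the paper: the paper's proof of Corollary~\ref{corssst} simply observes that steps~D and~0 are mutually inverse and invokes Theorem~\ref{thmsss}, exactly as you do. The extra verifications you supply (that the marked face has left length~$2$ in a quasi-triangulation and hence becomes a digon after~$\Phi$, and that boundary-reaching is stable under collapsing/doubling since the rightmost path never uses the collapsed copy) are precisely the facts the paper leaves implicit in the description of steps~D and~0, and you argue them correctly.
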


\section{Boundary-reaching probabilities}\label{secbr}

\paragraph{Cardinalities.}
We come back to the prefactor~\ref{prefb} from the end of Section~\ref{secenum}. We add several new variations to a subset $\cX\subseteq\cB$: for any symbol $\xs\in\{\B\edge,\B\iedge,\B\redge\}$, we define the set~$\cXx$ of pairs $(\m,e)$ where $\m\in\cX$ and~$e$ is 
\begin{itemize}
	\item any edge of~$\m$ if $\xs=\B\edge$\,;
	\item an \emph{internal} edge of~$\m$, that is, an edge having internal faces on both sides, if $\xs=\B\iedge$\,;
	\item a right-internal edge of~$\m$ if $\xs=\B\redge$.
\end{itemize}
Furthermore, we define the subset~$\cXbx$ of pairs $(\m,e)\in\cXx$ where~$e$ is also boundary-reaching. With this notation, the enumeration at the end of Section~\ref{secenum} yields 
\begin{align*}
|\cTe_{k-1,j+1}|		&=\big(3k+2j-4\big)\,T_{k-1,j+1},\\
|\cBi_{k-1,\ell,j+1}|	&=\big(k+\ell-2\big)\,B_{k-1,\ell,j+1},\\
|\cSr_{k-1,j+1}|		&=\big(2k+j-3\big)\,S_{k-1,j+1},
\end{align*}
for the values of~$k$, $\ell$, $j$ where those are defined.

Note that, for any given map, the internal edges are also right-internal. On the contrary, the only edge that is right-internal but not internal is the root. Since the root is obviously never boundary-reaching, we actually have $\cXbr=\cXbi$. The reason why we consider internal edges rather than right-internal edges in the case of general bipolar oriented maps should become clear in what follows.

\paragraph{Boundary-reaching probabilities.}
We now introduce the \emph{boundary-reaching probabilities}, defined as the ratios
\begin{equation}
\tau_{k,j}		\de\frac{|\cTbe_{k,j}|}		{|\cTe_{k,j}|},\qquad
\beta_{k,\ell,j}	\de\frac{|\cBbi_{k,\ell,j}|}	{|\cBi_{k,\ell,j}|},\qquad
\sigma_{k,j}		\de\frac{|\cSbr_{k,j}|}		{|\cSr_{k,j}|}.
\end{equation}
In words, those are the probabilities that the distinguished edge~$e$ is boundary-reaching, for a uniformly chosen random pair $(\m,e)$ in a given class ($\cTe_{k,j}$, $\cBi_{k,\ell,j}$, or~$\cSr_{k,j}$). 
%We will see that these only hold in an annealed sense. Recall that within a given class with fixed parameter ($k$, $j$, and possibly~$\ell$), all the maps have the same number of edges that satisfy its class-specific constraints (none, being internal, or being right-internal). In contrast, for a fixed map~$\m$, the probability that an edge chosen uniformly at random among those satisfying its class-specific constraints is boundary-reaching is not equal to the corresponding ratio. The ratio is only the average probability for a uniformly chosen map within the chosen class.

Recalling that $\cBbi_{k,\ell,j}=\cBbr_{k,\ell,j}$, Corollaries~\ref{corsssbs} and~\ref{corssst} then give a proof to Propositions~\ref{propt}, \ref{propb}, \ref{props}, provided that the following identities hold.

\begin{prop}\label{proppb}
We have  
\begin{align}
\tau_{k,j}		&=1-\frac{2}j	&\text{$k\geq 0$, $j\geq 2$,}				\label{eqratiot}\\ 
\beta_{k,\ell,j}	&=1-\frac{2}j	&\text{$k\geq 0$, $\ell\geq 1$, $j\geq 2$,}	\label{eqratiob}\\ 
\sigma_{k,j}		&=1-\frac{3}j	&\text{$k\geq 0$, $j\geq 3$.} 			\label{eqratios}
\end{align}
\end{prop}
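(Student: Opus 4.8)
The plan is to prove the three ratio identities in Proposition~\ref{proppb} by averaging the boundary-reaching condition over orbits of a natural rerooting operation on the right outer boundary, so that the probability becomes a deterministic count \emph{within each orbit}. Concretely, on a bipolar oriented map $\m$ with external face of degree $j$, the right outer boundary is a directed path of some length $d$ from $\rS$ to $\rN$; the left outer boundary is a directed path of length $j-d$. I would introduce a rerooting map $R$ that cyclically shifts the root edge along the outer boundary (equivalently, around the outer face), which is a bijection on the relevant class and partitions each of the sets $\cTe_{k,j}$, $\cBi_{k,\ell,j}$, $\cSr_{k,j}$ into orbits. The key point is to choose the rerooting so that the distinguished edge $e$ is carried along and so that, on a full orbit, one sees exactly the same map with all $j$ boundary positions for the root, while $e$ stays fixed relative to the interior. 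Then being boundary-reaching is a property that one must show holds for a \emph{fixed fraction} of the positions in the orbit — namely $1-\tfrac2j$ (resp.\ $1-\tfrac3j$ in the Schnyder case).

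The heart of the argument is the following local observation. For a fixed edge $e$ in a fixed (unrooted-boundary) map, whether $e$ is boundary-reaching depends only on where the rightmost path from $e$ first hits the outer cycle and on which outer edge is currently the root: the rightmost path from $e$ always reaches the outer boundary at some vertex $w$, and $e$ is boundary-reaching iff $w$ is \emph{not} $\rN$, i.e.\ iff $w$ lies strictly before $\rN$ on the right outer boundary for the current rooting. As we cycle the root around the $j$ outer edges, the pair (South Pole, North Pole) moves, and $w$ fails to be strictly-before-$\rN$ on the right boundary for exactly those rootings where $w$ coincides with the North Pole or lies on the open left boundary. I would argue that, summing over a whole orbit (and using that the rightmost path from $e$ depends only on the orientation, which is rerooting-invariant once we track the outer cycle), the number of ``bad'' rootings is exactly $2$ for general bipolar maps and Schnyder-like maps with the $\B\edge$/$\B\iedge$ count — because the two bad positions correspond to $w=\rN$ and to a single distinguished left-boundary position — whereas for Schnyder woods, the extra constraint (all internal faces of right length $2$, i.e.\ right outer boundary of length exactly... ) forces a third bad position, giving $1-\tfrac3j$. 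Equivalently, in the Schnyder case one uses that the right outer path has a rigid length, so the left outer boundary contributes two bad positions rather than one.

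I would organize it as: (i) define the rerooting operation and check it is a well-defined involution-free bijection on each class that preserves $k$, $\ell$, $j$ and preserves the underlying (boundary-forgotten) structure; (ii) check that the distinguished edge can be consistently transported and that the set of edges eligible for the distinguished role ($\B\edge$, $\B\iedge$, $\B\redge$ respectively) is rerooting-stable — this is where the choice of $\B\iedge$ over $\B\redge$ for general bipolar maps matters, since the root itself is right-internal-but-not-internal and must be excluded for the count to be rerooting-invariant, as already noted in the paragraph preceding the proposition ($\cXbr=\cXbi$); (iii) fix an orbit and a fixed distinguished edge $e$ in it, locate the vertex $w$ where the rightmost path from $e$ meets the outer cycle, and count exactly how many of the rootings in the orbit make $e$ boundary-reaching; (iv) conclude that the fraction is $1-\tfrac2j$ (resp.\ $1-\tfrac3j$) orbit by orbit, hence globally, giving \eqref{eqratiot}, \eqref{eqratiob}, \eqref{eqratios}. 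A subtlety to handle in step (iii) is the case where $w$ is an internal vertex of the right boundary versus of the left boundary versus a pole, and the degenerate small cases (e.g.\ maps with no internal faces, or $j$ small); I would dispatch these either directly or by citing the initial conditions in Section~\ref{secbase}.

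The main obstacle I anticipate is step (iii): making the counting of ``bad'' rootings genuinely exact and uniform across orbits. One has to be careful that the rightmost path from $e$ — and in particular the identity of the vertex $w$ — really is invariant under rerooting (it is, since it only uses the local incoming/outgoing partition at each vertex, which the rerooting preserves, \emph{provided} one defines the rerooting to rotate the root around the outer face without otherwise touching the combinatorial embedding), and that exactly the claimed number of the $j$ cyclic positions of the root put $w$ in the ``boundary at $\rN$'' situation. The cleanest way is probably to show that the $j$ rootings in an orbit correspond to the $j$ outer corners, that the left outer boundary always has length $\geq 1$ (it contains the root's left face, the external face), that the right outer boundary has length $\geq 1$ as well, and that $e$ is boundary-reaching for precisely the rootings where $w$ sits strictly inside the right outer path — a set of size $(\text{right length})-1$ hmm, which is not constant; so instead one should count over the orbit as root rotates: $w$ is a fixed outer vertex, and as the root rotates through all $j$ positions, $w$ is ``strictly before $\rN$ on the right boundary'' for exactly $j-2$ of them (the two exceptions: $w=\rN$, and $w=\rS$ which puts $w$ at the very start, not strictly before, \emph{or} on the left boundary), giving $\tau=\beta=1-\tfrac2j$; in the Schnyder case the rigidity of right-lengths removes one further admissible position, giving $1-\tfrac3j$. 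Pinning down that ``$j-2$'' (resp.\ ``$j-3$'') rigorously, including why $w$ can never be a position that would make the count off by one, is the delicate part and deserves a careful figure.
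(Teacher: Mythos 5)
Your plan founders at its central step: the rerooting you propose --- ``rotate the root around the outer face without otherwise touching the combinatorial embedding'' --- does not stay inside the class of bipolar oriented maps. In a bipolar orientation rooted at $\vec\rho=(\rS,\rN)$, the tail of the root is the unique source and its head the unique sink, so no other external edge can serve as root for the \emph{same} orientation; any genuine rerooting must flip edges (in the paper, exactly the edges that are not boundary-reaching, i.e.\ those with no directed path to the new sink). Once the orientation changes, the rightmost path from a fixed edge $e$, and hence your landing vertex $w$, is \emph{not} invariant along the orbit, so the whole of your step (iii) --- ``$w$ is a fixed outer vertex, and as the root rotates through all $j$ positions it is bad for exactly $2$ of them'' --- collapses. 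Relatedly, orbits are not of length $j$: they are unions of $|\Orb|/j$ different orientations per root position (the paper's figures show orbits of length $12$ for $j=4$ and $15$ for $j=5$), so the statement to prove is a $2/j$ proportion over \emph{all orientations} in an orbit, not over $j$ rootings of one fixed orientation. You half-notice the problem yourself (``a set of size (right length)$-1$, which is not constant''), but the fix you sketch relies precisely on the false invariance of $w$. The correct argument needs two ingredients you do not have: the base case that an external edge $\vec\rho_i$ is non-boundary-reaching exactly when the current root is $\vec\rho_{i-1}$ or $\vec\rho_i$ (this part matches your intuition), and a \emph{propagation} step showing that two edges consecutive around a vertex have the same multiplicity along an orbit, proved by tracking the corner between them and observing that the ``special'' orbit elements (where the two edges have different boundary-reaching status) alternate between two types.

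The Schnyder case is a second genuine gap: ``the rigidity of right lengths removes one further admissible position'' is an assertion, not an argument, and it is not even clear that your rerooting (or the orientation-flipping one) preserves the condition that all internal faces have right length $2$. The paper does not reroot within that class of bipolar maps at all; it transfers the problem to quasi-$3$-orientations of quasi-triangulations, defines a rerooting that reverts a separating path, proves (again by an external-face base case plus face-to-face propagation) that each internal face lies in the right region for a $3/j$ proportion of each orbit, and then translates back using the correspondence between internal faces of the quasi-triangulation and right-internal edges of the bipolar map, under which left-region faces correspond to boundary-reaching edges. Some mechanism of this kind is needed to get the $1-\frac{3}{j}$; your outline provides none.
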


These will be obtained as consequences of properties satisfied by orientations grouped into orbits of a rerooting operation. We will actually see that for the first two cases, the ratio formula holds in a much stronger sense, namely the probability that~$e$ is boundary-reaching is equal to $1-2/j$ for \emph{any fixed pair} $(\m,e)$, taking a uniformly random bipolar orientation of~$\m$, upon allowing the root edge to be any external edge. For the third case, the ratio formula holds in a similarly strong sense when translated to quasi-triangulations.

\subsection{Rerooting operator for bipolar oriented maps}

In this section, we focus on~\eqref{eqratiot} and~\eqref{eqratiob}. Since these obviously hold when $j=2$, we will restrict ourselves to the case $j\ge 3$.

\paragraph{Rerooting operator.}
For a planar map~$\m$ and a marked oriented edge $\vec \rho=(u,v)$ of~$\m$, a \emph{bipolar orientation of~$\m$ rooted at~$\vec \rho$} is an orientation of all the edges of~$\m$ in such a way that the resulting oriented map is a bipolar oriented map with root edge~$\vec \rho$. It is known~\cite{Lem67} that~$\m$ admits a bipolar orientation rooted at~$\vec \rho$ if and only if~$\m$ is 2-connected, that is, such that the deletion of any single vertex does not disconnect~$\m$.

Moreover, for another marked directed edge $\vec \rho\,'=(u',v')$ of~$\m$, it is known~\cite{FrOsRo95} that the bipolar orientations of~$\m$ rooted at~$\vec \rho$ are in bijection with the bipolar orientations of~$\m$ rooted at~$\vec \rho\,'$. Consequently, the number of bipolar orientations of~$\m$ does not depend on the choice of the marked directed edge. In the special case -- the one we will use in the present work -- where $u'=v$, the bijection, called \emph{rerooting operator}, works as follows: given a bipolar orientation of~$\m$ rooted at~$\vec \rho$, an edge is flipped (in the sense that its orientation is reverted) if and only if there does not exist a directed path from it to~$v'$. See Figure~\ref{reroot}.

\begin{figure}[ht!]
	\centering\includegraphics[width=6cm]{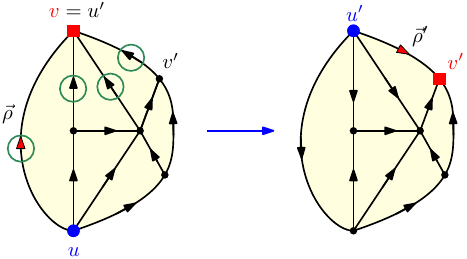}
	\caption{Rerooting operator in the case where the tail of the new root edge is the head of the former root edge. In this context, we represent North poles with red squares and South poles with blue disks. The 4 edges to be flipped are circled in green.}
	\label{reroot}
\end{figure}

\paragraph{Orbits.}
A \emph{rooted map} is a planar map with a marked oriented edge, called the \emph{root edge}, all the other edges not being oriented. As above, the \emph{external face} is the one on the left of the root edge, and is always drawn as the unbounded component of the plane.

For a 2-connected map~$\m$ with root edge~$\vec \rho$, and external face degree $j\geq 3$, let $\vec \rho=\vec \rho_0$, \dots, $\vec \rho_{j-1}$ be the oriented edges in clockwise order around the external face. We denote by~$\cB_i(\m)$ the set of bipolar oriented maps on~$\m$ rooted at $\vec \rho_i$, and set $\cB(\m)\de\bigcup_{i=0}^{j-1}\cB_i(\m)$.

The rerooting operator from the previous paragraph yields a bijection~$\sigma$ from~$\cB(\m)$ to~$\cB(\m)$, specializing into a bijection from~$\cB_i(\m)$ to~$\cB_{i+1}(\m)$ for every $0\leq i<j$. Note that the bijection~$\sigma$ amounts to flipping the edges that are not boundary-reaching, and take $\vec \rho_{i+1}$ as the new root edge.

An \emph{orbit} of $\cB(\m)$ is a cyclic sequence $\Orb$ of distinct elements in $\cB(\m)$ such that for any pair~$\bx$, $\bx'$ of successive elements of~$\Orb$, one has~$\bx'=\sigma(\bx)$. See Figure~\ref{orbit} for an example. Note that the length of any orbit has to be a multiple of~$j$, since the index~$i$ of the root edge goes up by~$1$ (modulo~$j$) from one orbit element to the next one.  

\begin{figure}[ht!]
	\centering\includegraphics[width=.95\linewidth]{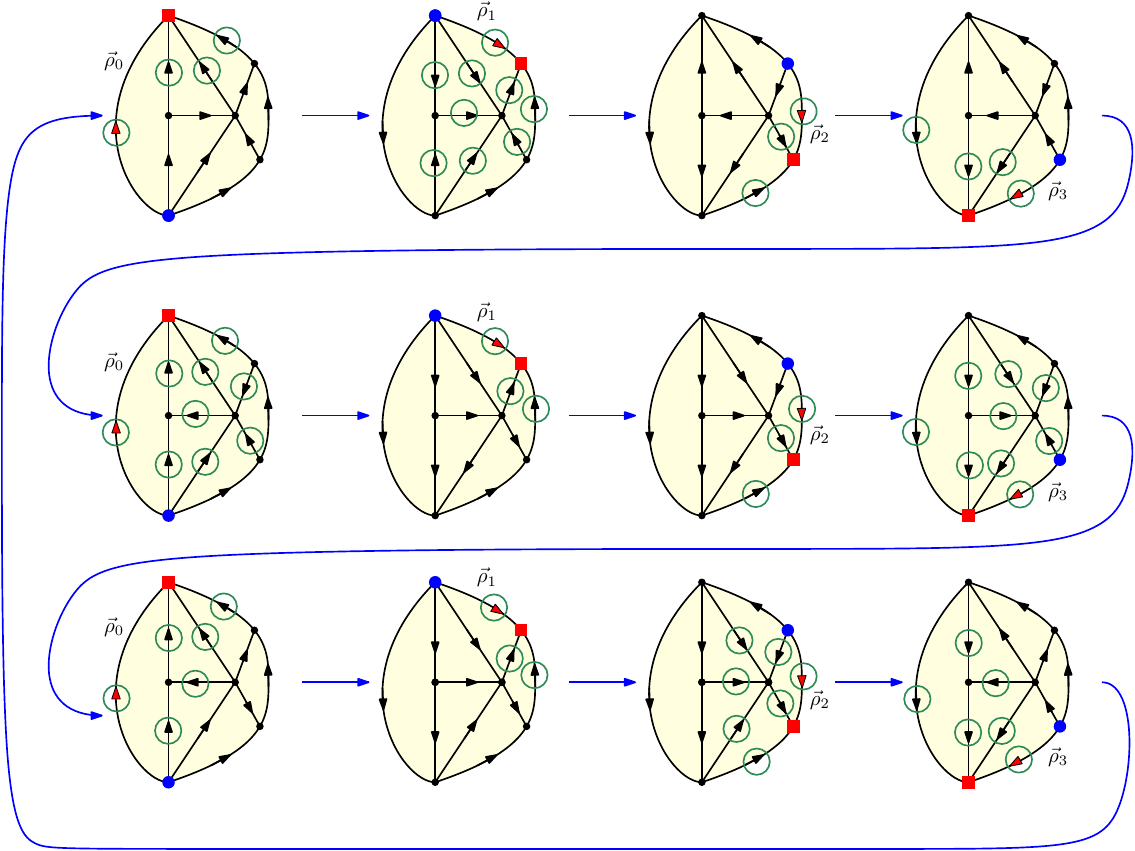}
	\caption{An orbit of length $12$ on bipolar oriented maps with external face degree $j=4$. At each step, the edges that are not boundary-reaching are circled in green; these are the edges to be flipped. One can note that for any given edge~$e$, there are exactly~6 elements of the orbit such that~$e$ is not boundary-reaching.}
	\label{orbit}
\end{figure}

\paragraph{Boundary-reaching submap.}
Given a bipolar oriented map $\bx\in\cB(\m)$, we define its \emph{boundary-reaching submap}~$\tilde{\bx}$ as the set of all boundary-reaching edges of~$\bx$. It is easy to see that it forms a connected submap that contains all the vertices of the boundary of~$\bx$ except the North pole. Since we assume that $j\geq 3$, the North pole has two neighbors on the boundary of~$\bx$: the South pole and another vertex~$v$. We define the \emph{separating path}~$P$ as the leftmost path that is incoming at~$v$ and starts from the South Pole, or, equivalently, the unique directed path from~$\rS$ to~$v$ such that any edge~$e$ on~$P$ is the leftmost incoming edge at~$e^+$. Note that all the edges of~$P$ are boundary-reaching; moreover by planarity, $P$ actually separates~$\tilde{\bx}$ from its complement in~$\bx$, in the sense that the boundary-reaching edges of~$\bx$ are exactly those on~$P$ or to its right. See Figure~\ref{figorbit_prop}.

One can then rephrase the rerooting operation in terms of these objects: to obtain~$\sigma(\bx)$ from~$\bx$, flip all edges of $\bx\setminus \tilde{\bx}$ and set the new root edge as the one succeeding the original one along the boundary in clockwise order, that is, the one from the original North pole to the tip of the separating path.

\begin{figure}[ht!]
	\centering\includegraphics[width=12cm]{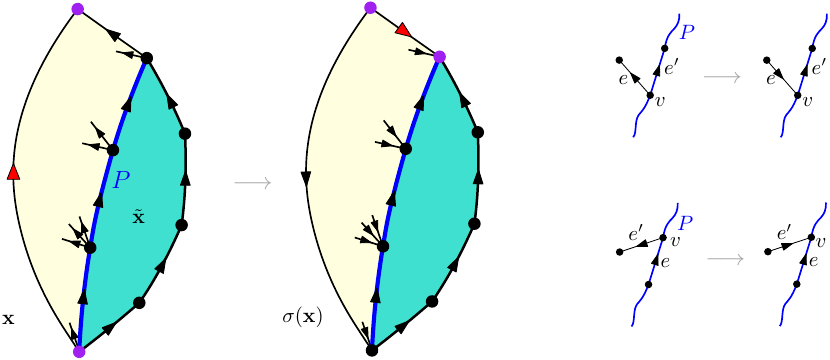}
	\caption{\textbf{Left.} Two successive elements~$\bx$, $\sigma(\bx)$ in an orbit; the flipped edges are those that are not boundary-reaching, that is, those outside of~$\tilde{\bx}$. The separating path~$P$ is in blue and the boundary-reaching submap~$\tilde{\bx}$ in turquoise. \textbf{Right.} For two consecutive edges~$e$, $e'$ around a vertex~$v$ (with the corner they delimit in an internal face), the two situations where only one of~$e$, $e'$ is flipped in the transition from~$\bx$ to~$\sigma(\bx)$.}
	\label{figorbit_prop}
\end{figure}

\paragraph{Orbit property.}
Let us now compute the average probability over a given orbit that a given edge is boundary-reaching.

\begin{lem}\label{lem:orbit}
Let~$\m$ be a rooted 2-connected map of external face degree $j\geq 3$. Let $\Orb$ be an orbit of $\cB(\m)$. Then, for any edge~$e$ of~$\m$, there is a proportion $2/j$ of elements in $\Orb$ for which~$e$ is not boundary-reaching.
\end{lem}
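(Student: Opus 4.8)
The natural strategy is to fix an edge $e$ of $\m$ and count, over one full orbit $\Orb$, how many elements $\bx\in\Orb$ have $e$ non-boundary-reaching, i.e.\ $e\notin\tilde{\bx}$. The key structural fact to exploit is the description of the rerooting operation via the boundary-reaching submap and the separating path $P$: passing from $\bx$ to $\sigma(\bx)$ flips exactly the edges outside $\tilde{\bx}$ and advances the root by one along the external face. Since an orbit has length $mj$ for some $m\ge 1$ (the root index cycles mod $j$), I want to show that in each block of $j$ consecutive steps the edge $e$ is non-boundary-reaching exactly twice, hence $2/j$ overall. Equivalently, I will track the ``status'' of $e$ (boundary-reaching or not) as a function of the position of the root edge, and argue that as the root sweeps once around the external face, $e$ is flipped an even number of times, with exactly two of the $j$ root positions giving $e\notin\tilde{\bx}$.

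**Key steps.** First I would make precise, for a fixed underlying map $\m$, how the set of boundary-reaching edges depends on the rooting: starting from any $\bx_0\in\cB_0(\m)$ and applying $\sigma$ repeatedly, the submaps $\tilde{\bx_0},\tilde{\bx_1},\dots$ and the separating paths $P_0,P_1,\dots$ evolve in a controlled way, because $\sigma$ only flips the edges \emph{not} in $\tilde{\bx}$ and relocates $\rS,\rN$ by one boundary step. Second, I would analyze one transition $\bx\to\sigma(\bx)$ locally, as suggested by the right-hand picture of Figure~\ref{figorbit_prop}: for two consecutive edges $e,e'$ around a non-pole vertex delimiting a corner in an internal face, at most one of them changes its boundary-reaching status (this is the content of that figure), which lets me propagate information about $e$ across the orbit from information on neighboring edges. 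Third — and this is the crux — I would set up a counting/parity argument: think of the orbit as a cyclic word in $\{0,1\}^{mj}$ recording whether $e$ is boundary-reaching; using the local transition analysis plus the fact that, over a full rotation of the root around the $j$ external edges, each edge gets flipped a number of times equal to the number of positions in which it is non-boundary-reaching, and the orientation must return to itself after $j$ steps up to relabeling the poles, I extract that this count must equal $2$ in each period of $j$. Finally, summing over the $m$ periods gives the proportion $2/j$.

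**Alternative, cleaner route.** Rather than a pure parity argument, it may be smoother to argue geometrically: fix $\bx$ and consider the $j$ external edges $\vec\rho_0,\dots,\vec\rho_{j-1}$; rerooting at $\vec\rho_i$ produces $\sigma^i(\bx)$ (after $i$ steps of the orbit started at $\bx$). For each $i$, the boundary-reaching submap is bounded by the separating path from the (new) South pole to the neighbor of the (new) North pole. As $i$ increases by $1$, both poles advance one step clockwise along the external face. I would argue that a fixed edge $e$ lies ``to the right of (or on) the separating path'' for exactly a contiguous cyclic interval of root-positions, and that the complementary interval (where $e$ is non-boundary-reaching) has length exactly $2$ — intuitively because being non-boundary-reaching means every directed path from $e$ hits the boundary only at $\rN$, which, by planarity and the bipolar structure, pins the head of such paths to one of just two consecutive boundary positions as the root rotates. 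Combining this with the orbit having length $mj$, and the root index incrementing by $1$ at each step, gives exactly $2m$ bad elements out of $mj$.

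**Main obstacle.** The hard part will be step three — turning the qualitative ``at most one status change per adjacent corner'' and ``poles advance by one'' observations into the precise numerical value $2$ rather than just ``some constant independent of the orbit.'' Getting the constant to be exactly $2$ (and $3$ in the Schnyder case, as needed for \eqref{eqratios}) presumably requires carefully identifying which two consecutive root-positions make $e$ non-boundary-reaching, likely by following the head of the rightmost path from $e$ and showing it sits on the external boundary segment of length $2$ just before $\rN$; controlling this uniformly over the orbit, and checking it does not degenerate for edges near the poles or on the boundary itself, is where the real work lies. The earlier structural results (Theorem~\ref{thmsss} and the rerooting bijection of~\cite{FrOsRo95}) should, however, reduce this to a finite local case analysis around the North pole.
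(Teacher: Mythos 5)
There is a genuine gap: the quantitative heart of the lemma is exactly what your plan leaves open, and the two claims you lean on to get the constant $2$ are not sound. You assert that ``the orientation must return to itself after $j$ steps up to relabeling the poles'' and that $e$ is non-boundary-reaching \emph{exactly twice in each block of $j$ consecutive steps} (equivalently, in your alternative route, that the bad root positions form a contiguous cyclic interval of length $2$). But an orbit has length $mj$ with $m$ typically larger than $1$ (the paper exhibits an orbit of length $12$ for $j=4$), so after $j$ steps the root edge is back but the orientation is in general a \emph{different} element of $\cB_0(\m)$; the boundary-reaching status of a fixed edge is therefore not a function of the root position, and the per-window-of-$j$ statement is strictly stronger than the lemma and unjustified (the lemma only asserts the proportion over the whole orbit; for internal edges the $2a$ bad elements need not be spread two per window). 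The only correct parity fact available from ``each edge returns to its original orientation'' is that the multiplicity is even over the full orbit, which does not pin it to $2|\Orb|/j$. You acknowledge this yourself in your ``main obstacle'' paragraph, so what is written is a plan whose crucial step is missing, not a proof.

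For comparison, the paper closes this gap with two ingredients your outline gestures at but does not execute. First, a direct base case for \emph{external} edges: the nonoriented edge underlying $\vec\rho_i$ is non-boundary-reaching precisely in the elements of $\Orb$ rooted at $\vec\rho_{i-1}$ or $\vec\rho_i$, giving multiplicity $2\,|\Orb|/j$ there. Second, a propagation step: if $e,e'$ are consecutive around a vertex with their common corner $c$ in an internal face, they have the \emph{same total multiplicity} over $\Orb$. This is proved not by ``at most one status change per transition'' alone, but by monitoring whether $c$ is extremal or lateral along the orbit: the status of $c$ changes exactly at the ``special'' elements where one of $e,e'$ lies on the separating path and the other just left of it, and these special elements must alternate between the two possible configurations around the cyclic orbit, hence occur equally often, so $e$ and $e'$ are flipped equally often. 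Equal multiplicity then propagates from the boundary (where the count is known) to every edge. It is this alternation-of-special-elements argument applied to the corner status --- a global cyclic parity statement, not a per-period count --- that converts the local picture of Figure~\ref{figorbit_prop} into the exact value $2/j$, and it is the idea your proposal is missing.
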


\begin{proof}
For a fixed orbit $\Orb$, the \emph{multiplicity} of an edge~$e$ of~$\m$ is the number of elements in~$\Orb$ such that~$e$ is not boundary-reaching, that is, is to be flipped in the transition to the next element of~$\Orb$. Letting $a\de |\Orb|/j$, we thus have to prove that every edge has multiplicity~$2a$. The property is very easy to check for edges incident to the external face. Indeed, for $0\le i<j$, the elements in $\Orb$ such that the (nonoriented) edge corresponding to~$\vec \rho_i$ is not boundary-reaching are exactly those where the root edge is either~$\vec \rho_{i-1}$ or~$\vec \rho_{i}$. 

\bigskip
We now show that the property propagates step by step to any edge by showing that two edges~$e$, $e'$ consecutive in clockwise order around a vertex~$v$ have the same multiplicity. This clearly allows to conclude the proof since, for every edge~$e$, one can find a sequence of edges starting with an external edge (for which the property is known), ending at~$e$, and where two successive edges in the sequence are consecutive around a vertex. We may assume that the corner~$c$ between~$e$ and~$e'$ is in an internal face since, otherwise, it means that~$e$ and~$e'$ are in the external face and we already know that they have same multiplicity. 

We introduce a bit of terminology. For any orientation of the edges of~$\m$, the corner~$c$ between~$e$ and~$e'$ is called \emph{extremal} if~$e$ and~$e'$ are either both incoming or both outgoing, and is called \emph{lateral} otherwise. We call \emph{special} the elements of $\Orb$ for which~$e$ and~$e'$ have a different status (boundary-reaching or not boundary-reaching). Let $\bx\in\Orb$; the following holds.
\begin{itemize}
	\item If~$\bx$ is not special, then~$e$ and~$e'$ both belong to the same submap~$\tilde\bx$ or $\bx\setminus \tilde{\bx}$. So, they are either both flipped or both left unchanged by the rerooting operation. As a result, $c$ has the same status (lateral or extremal) in~$\bx$ and in~$\sigma(\bx)$. 
	\item If~$\bx$ is special, then there are two possibilities, shown on the right of Figure~\ref{figorbit_prop}.
	\begin{description}
		\item[\textit{Top case.}] If~$e$ is not boundary-reaching (and thus~$e'$ is boundary-reaching) in~$\bx$, then it means that~$e$ is just on the left of~$P$ and~$e'$ is on~$P$, both~$e$ and~$e'$ going out of~$v$. Then~$c$ is extremal in~$\bx$ and lateral in~$\sigma(\bx)$.
		\item[\textit{Bottom case.}] If~$e$ is boundary-reaching (and thus~$e'$ is not boundary-reaching) in~$\bx$, then it means that~$e$ is on $P$ and~$e'$ is just on the left of~$P$ (possibly, $e$ being the last edge on the right boundary of~$\m$), with~$e$ incoming  and~$e'$ outgoing at $v$. Then~$c$ is lateral in~$\bx$ and extremal in~$\sigma(\bx)$. 
	\end{description}
\end{itemize}   

Monitoring the status of~$c$ along the orbit~$\Orb$, we see that the special elements in $\Orb$ alternate between the top case and the bottom case; the number of these special elements is thus evenly spread among the two cases. This entails that~$e$ and~$e'$ have the same multiplicity.    
\end{proof}

\begin{rem}\label{rem:homomesy}
Lemma~\ref{lem:orbit} is an instance of the \emph{homomesy} phenomenon~\cite{PrRo15}, which has been actively studied in the context of the rowmotion operator~\cite{Str15,DeHoPo23}.   
Namely, for a rooted 2-connected map~$\m$ of external degree~$j$, and for an edge~$e$ of~$\m$, Lemma~\ref{lem:orbit} exactly states that the indicator function for the event that~$e$ is not boundary-reaching is $\frac{2}{j}$-mesic on~$\cB(\m)$ under the rerooting operator.   

As reviewed in~\cite[Section~2.2]{PrRo15}, a well-known example where homomesy explains the prefactor in a counting formula is the cycle lemma for counting positive words -- words such that each non-empty prefix has more $1$'s than $0$'s -- in $\mathfrak{S}(1^b\,0^a)$ (for some fixed $b>a$). These are counted by $\frac{b-a}{a+b}\binom{a+b}{a}$, where the prefactor $\frac{b-a}{a+b}$ gives the proportion of positive words in each orbit under the shift operator. 
The situation for orbit sizes is rather simple in this case: they are of the form $(a+b)/d$, where $d\,|\,(b-a)\land(a+b)$.  In contrast, in our case, simulations show that the orbit sizes in $\cB(\m)$ vary a lot, and some are much larger than the number of edges of~$\m$.  
\end{rem}

\begin{rem}\label{rem:corresp}
For every external edge~$\rho$ of~$\m$, there is a clear $j$-to-$2$ correspondence between the elements of $\Orb$ and the elements of $\Orb$ such that~$\rho$ is not boundary-reaching: indeed, for~$j$ consecutive elements of $\Orb$ there are exactly two elements where~$\rho$ is not boundary-reaching (the one mentioned at the beginning of the proof of Lemma~\ref{lem:orbit}). Moreover, for any two edges~$e$, $e'$ of~$\m$ that are consecutive around a vertex~$v$, the proof ensures that there is a bijection $\iota_{e,e'}$ from $\Orb$ to $\Orb$ such that~$e$ is boundary-reaching in $\bx\in\Orb$ if and only if~$e'$ is boundary-reaching in $\iota_{e,e'}(\bx)$: if~$\bx$ is not special, we set $\iota_{e,e'}(\bx)\de\bx$, and otherwise, we let $\iota_{e,e'}(\bx)$ be the next special element along the orbit. By propagation, we thus obtain, for each edge~$e$ of~$\m$, a bijection~$\iota_e$ from $\Orb$ to $\Orb$ such that~$e$ is boundary-reaching in~$\bx$ iff an arbitrarily fixed external edge~$\rho$ is boundary-reaching in $\iota_e(\bx)$. Via this bijection we thus have a $j$-to-$2$ correspondence between the elements of $\Orb$ and the elements of $\Orb$ such that~$e$ is not boundary-reaching. 
\end{rem}

\begin{corol}\label{coro:good_proportion}
Let~$\m$ be a rooted 2-connected map of external face degree $j\geq 3$. Then, for any edge~$e$ of~$\m$, there is a proportion $2/j$ of elements in $\cB(\m)$ for which~$e$ is not boundary-reaching.
\end{corol}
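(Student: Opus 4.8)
The plan is to derive Corollary~\ref{coro:good_proportion} from Lemma~\ref{lem:orbit} simply by decomposing $\cB(\m)$ into orbits. First I would observe that the rerooting operator $\sigma$ is a bijection from $\cB(\m)$ to itself, so it generates an action of the cyclic group $\mathbb{Z}$ on the finite set $\cB(\m)$, and hence $\cB(\m)$ is partitioned into (finitely many) orbits in the sense defined just above the lemma. This partition is the whole content of the reduction.

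Then, for a fixed edge~$e$ of~$\m$, I would count in two ways the number of pairs $(\Orb,\bx)$ where $\Orb$ is an orbit and $\bx\in\Orb$ is an element for which $e$ is not boundary-reaching. Since the orbits partition $\cB(\m)$, summing over orbits gives exactly the total number of elements of $\cB(\m)$ for which $e$ is not boundary-reaching; on the other hand, Lemma~\ref{lem:orbit} tells us that this count restricted to a single orbit $\Orb$ equals $\tfrac{2}{j}|\Orb|$. Summing $\tfrac{2}{j}|\Orb|$ over all orbits and using $\sum_{\Orb}|\Orb|=|\cB(\m)|$, we get that the number of elements of $\cB(\m)$ for which $e$ is not boundary-reaching equals $\tfrac{2}{j}|\cB(\m)|$, which is precisely the claimed proportion.

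There is essentially no obstacle here: the only thing to check carefully is that every orbit does have length a multiple of $j$ (already noted in the text, since the root-edge index increments by $1$ modulo $j$ at each step), so that $\tfrac{2}{j}|\Orb|$ is an integer and the per-orbit statement of Lemma~\ref{lem:orbit} makes sense and applies verbatim. Concretely the proof reads:

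\begin{proof}
The rerooting operator~$\sigma$ is a bijection from~$\cB(\m)$ to itself, so~$\cB(\m)$ is partitioned into orbits. Fix an edge~$e$ of~$\m$. By Lemma~\ref{lem:orbit}, each orbit~$\Orb$ contains exactly $\frac{2}{j}|\Orb|$ elements for which~$e$ is not boundary-reaching. Summing over all orbits and using that they partition~$\cB(\m)$, the number of elements of~$\cB(\m)$ for which~$e$ is not boundary-reaching equals $\sum_{\Orb}\frac{2}{j}|\Orb|=\frac{2}{j}|\cB(\m)|$, which is the announced proportion.
\end{proof}
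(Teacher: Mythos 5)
Your proof is correct and is exactly the intended argument: the paper states the corollary without written proof precisely because it follows from Lemma~\ref{lem:orbit} by partitioning $\cB(\m)$ into orbits of the bijection~$\sigma$ and summing the per-orbit count $\frac{2}{j}|\Orb|$. Nothing is missing, and your remark that each orbit length is a multiple of~$j$ matches the observation already made in the paper.
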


\begin{proof}[Proof of Proposition~\ref{proppb}, Equations~\eqref{eqratiot} and~\eqref{eqratiob}]
Note that $j\,|\cTe_{k,j}|$ (resp.\ $j\,|\cTbe_{k,j}|$) is the number of triples $(\m,\bx,e)$ where~$\m$ is a bipolar oriented quasi-triangulation with~$k$ internal vertices and~$j$ external vertices, $\bx\in\cB(\m)$, and~$e$ is an edge (resp.\ a boundary-reaching edge) of~$\m$. Corollary~\ref{coro:good_proportion} then ensures that $j\,|\cTbe_{k,j}|=\big(1-\frac{2}{j}\big)\,j\,|\cTe_{k,j}|$, so that $\tau_{k,j}=1-2/j$, as desired. 

Similarly, $j\,|\cBi_{k,\ell,j}|$ (resp.\ $j\,|\cBbi_{k,\ell,j}|$) is the total number of triples $(\m,\bx,e)$ where~$\m$ is a rooted 2-connected map with~$k$ internal vertices, $\ell$ internal faces, and~$j$ external vertices, $\bx\in\cB(\m)$, and~$e$ is an internal edge (resp.\ a boundary-reaching internal edge) of~$\m$. Corollary~\ref{coro:good_proportion} then ensures that $j\,|\cBbi_{k,\ell,j}|=\big(1-\frac{2}{j}\big)\,j\,|\cBi_{k,\ell,j}|$, so that $\beta_{k,\ell,j}=1-2/j$, as wanted.
\end{proof}

\subsection{Rerooting operator for Schnyder woods}

We now turn to~\eqref{eqratios}. The overall strategy is the same as in the previous section. We will, however, work directly with Schnyder woods in a first step, considering the rerooting operator on so-called 3-orientations. In a second step, we will return back to bipolar orientations, right after Equation~\eqref{ratiocQLf}.

\paragraph{3-orientations and rerooting mapping.}
For a Schnyder wood on a triangulation~$\bt$, we convene that the root edge of~$\bt$ is the external edge $(\rho_2,\rho_3)$. Similarly to bipolar orientations, the number of Schnyder woods of a simple planar triangulation~$\bt$ does not depend on the choice of the root edge. In order to establish this claim, we use so-called \emph{3-orientations}, that is, orientations of the internal edges such that every internal vertex has outdegree~$3$ and the~$3$ external vertices~$\rho_1$, $\rho_2$, $\rho_3$ have outdegree~$0$.

It is known~\cite{Fel04} that, for each 3-orientation of~$\bt$, there is a unique Schnyder wood whose underlying orientation is the given 3-orientation. It is obtained by ``propagating the colors'' according to the local rule depicted on the right of Figure~\ref{schnyd}. More precisely, for any internal edge~$e$, one considers the so-called \emph{straight path}~$P_e$ of~$e$, which is the unique directed path starting at~$e$, ending at one of the external vertices~$\rho_i$ and having one outgoing edge on each side at every vertex it passes by; then the color assigned to~$e$ is that of~$\rho_i$ in the sense that $e\in\bt_i$. Consequently, once a root edge of~$\bt$ is fixed, its 3-orientations bijectively correspond to its Schnyder woods.

Let $(\rho_2,\rho_3)$ be a directed edge of~$\bt$, let~$\rho_1$ be the other vertex incident to the face on the left of $(\rho_2,\rho_3)$, and let~$z$ be the other vertex incident to the face on the left of $(\rho_1,\rho_3)$. See the left column in Figure~\ref{triangulation2}. There is a simple bijection between the 3-orientations of~$\bt$ rooted at $(\rho_2,\rho_3)$ and those rooted at $(\rho_3,z)$, which proceeds as follows. We revert the straight path~$P$ of the unique outgoing edge of~$z$ not leading to~$\rho_1$ or~$\rho_3$, we discard the orientations of $(z,\rho_1)$ and $(z,\rho_3)$, and we orient the edges $(\rho_2,\rho_3)$ and $(\rho_2,\rho_1)$ outward of~$\rho_2$ (note that~$P$ corresponds to the unique path of~$\bt_2$ from~$z$ to~$\rho_2$). See the middle column in Figure~\ref{triangulation2}. The claim follows.

\begin{figure}[ht!]
	\centering\includegraphics[width=.95\linewidth]{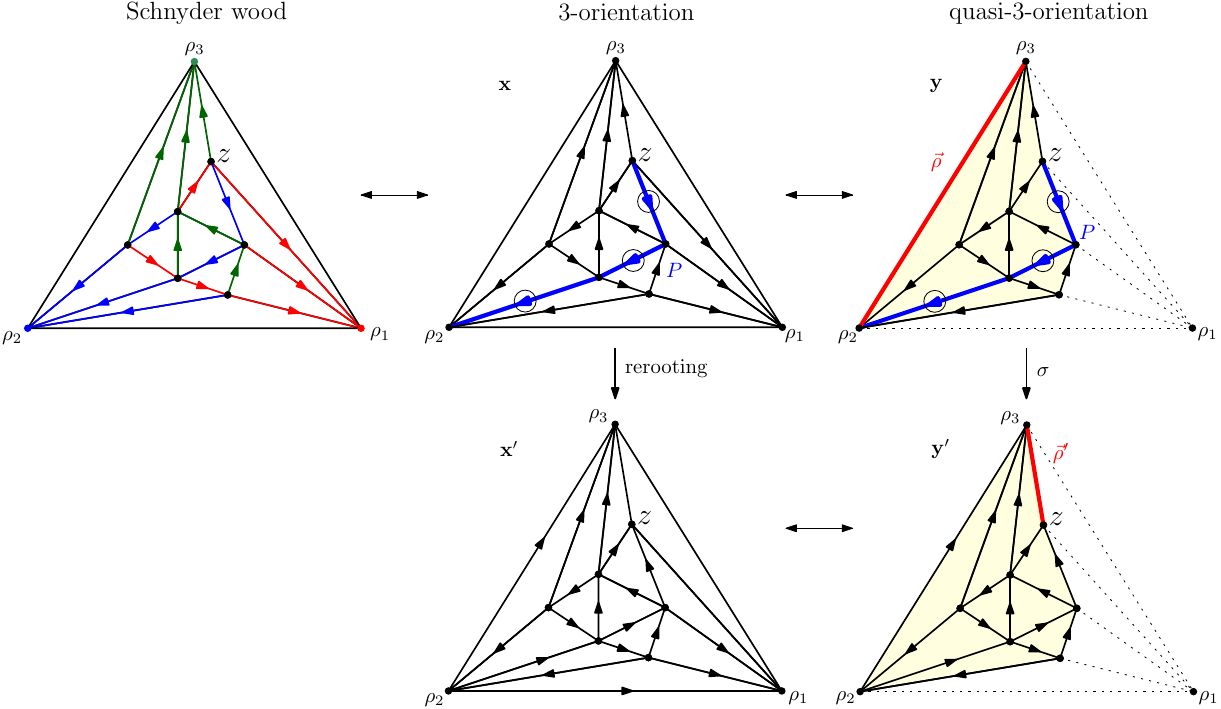}
	\caption{The rerooting mapping on a 3-orientation, and the induced rerooting operator~$\sigma$ on the associated quasi-3-orientation.}
	\label{triangulation2}
\end{figure}

\paragraph{Rerooting operator on quasi-3-orientations.}
We will actually adapt this rerooting mapping to the slightly different setting of quasi-triangulations. 
Given a simple quasi-triangulation~$\bq$ and~$\vec\rho$ one of its~$j$ external edges, a \emph{quasi-3-orientation} of~$\bq$ rooted at~$\vec\rho$ is an orientation of all the edges of~$\bq$ but~$\vec\rho$, such that:
\begin{itemize}
	\item the two extremities of~$\vec\rho$ have outdegree~$0$, 
	\item the other external vertices have outdegree~$2$, 
	\item the internal vertices have outdegree~$3$. 
\end{itemize}

For a simple triangulation~$\bt$ rooted at $(\rho_2,\rho_3)$, we obtain a quasi-triangulation~$\bq$ by deleting the other vertex incident to the face on the left of $(\rho_2,\rho_3)$ (denoted by~$\rho_1$ above), as well as its incident edges. If~$\bt$ has $k+j+1$ vertices then~$\bq$ has external face degree~$j$ and~$k$ internal vertices. Moreover, the 3-orientations of~$\bt$ clearly correspond to the quasi-3-orientations of~$\bq$.

Let~$\bt$ be a simple triangulation, let~$\vec\rho=(\rho_2,\rho_3)$ be an oriented edge of~$\bt$, let~$\bq$ be the quasi-triangulation corresponding to~$\bt$ rooted at~$\vec\rho$, and let~$\vec\rho\,'=(\rho_3,z)$. Let~$\bx$ be a 3-orientation of~$\bt$ rooted at~$\vec\rho$, and~$\bx'$ the 3-orientation obtained from the rerooting mapping described above. Let finally~$\by$ and~$\by'$ be the quasi-3-orientations of~$\bq$ corresponding respectively to~$\bx$ and~$\bx'$.

It is easy to directly describe how~$\by'$ is obtained from~$\by$. The \emph{separating path} for~$\by$ is the unique directed path~$P$ starting at~$z$ with the outgoing edge not leading to~$\rho_3$, ending at~$\rho_2$, and having a single outgoing edge on its right at every vertex it passes by. Then, in order to obtain~$\by'$ from~$\by$, we revert~$P$, discard the orientation of~$\vec\rho\,'$, and orient~$\vec\rho$ from~$\rho_2$ to~$\rho_3$. Note that the root edge has shifted by one unit in clockwise order along the external contour. We call \emph{rerooting operator} the mapping~$\sigma$ sending~$\by$ to~$\by'$. See the right column in Figure~\ref{triangulation2}.

\paragraph{Orbits.}
For a simple quasi-triangulation~$\bq$ with root edge~$\vec \rho$ and external face degree $j\geq 3$, let $\vec \rho=\vec \rho_0$, \dots, $\vec \rho_{j-1}$ be the oriented edges in clockwise order around the external face. For $0\leq i<j$, let $\cQ_i(\bq)$ be the set of quasi-3-orientations of~$\bq$ rooted at~$\vec \rho_i$, and set $\cQ(\bq)\de\bigcup_{i=0}^{j-1}\cQ_i(\bq)$. Similarly as in the previous section, the rerooting operator~$\sigma$ yields a bijection from~$\cQ_i(\bq)$ to~$\cQ_{i+1}(\bq)$, hence a bijection from~$\cQ(\bq)$ to~$\cQ(\bq)$.

\begin{figure}[ht!]
	\centering\includegraphics[width=.95\linewidth]{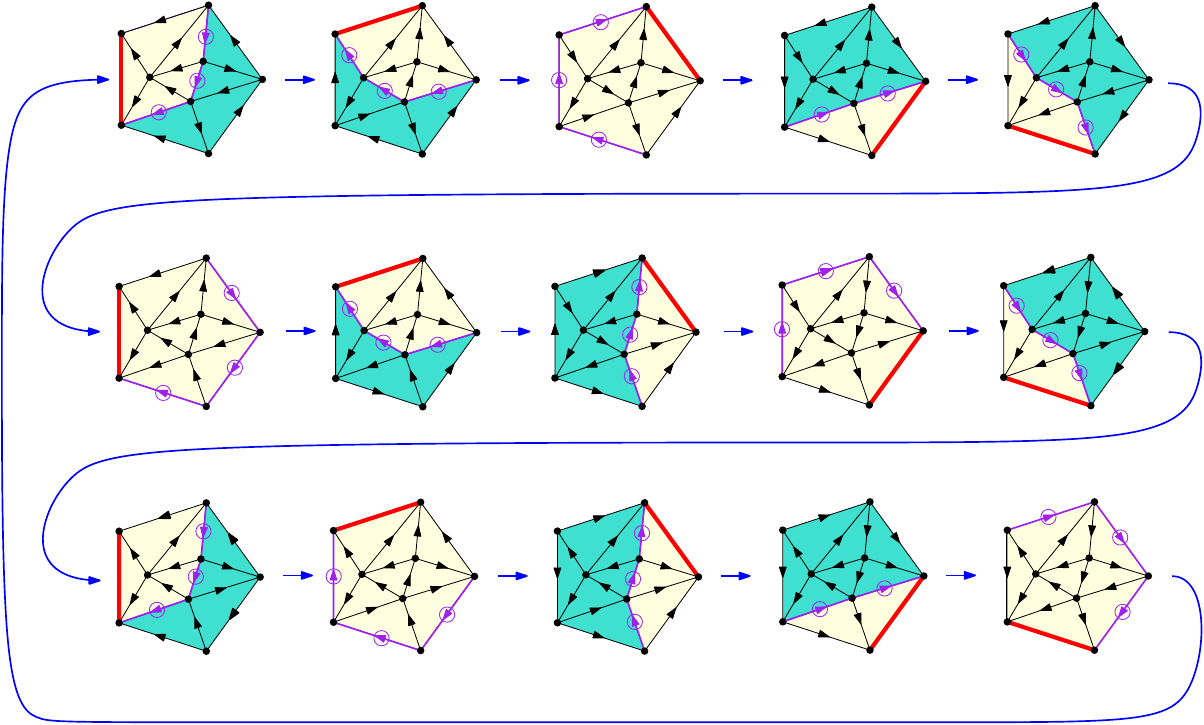}
	\caption{An orbit of length $15$ of quasi-3-oriented quasi-triangulations with external face degree $j=5$. For each element, the root edge is bold-red, the separating path is purple, and the right and left regions are respectively light yellow and turquoise. Each internal face appears~9 times in the right region (as ensured by the orbit property).}
	\label{orbit_schnyder}
\end{figure}

As above, an \emph{orbit} of~$\cQ(\bq)$ is a cyclic sequence $\Orb$ of distinct elements of~$\cQ(\bq)$ obtained by subsequent applications of~$\sigma$. Note that the length of any orbit is again a multiple of~$j$. See Figure~\ref{orbit_schnyder}. For $\bx\in\cQ(\bq)$, the separating path~$P$ of~$\bx$ yields a partition of the internal faces of~$\bq$ into two regions: the one on the left (resp.\ right) of~$P$ is called the \emph{left region} (resp.\ the \emph{right region}) for~$\bx$.

\paragraph{Orbit property.}
Similarly as above, the average probability over a given orbit that a given face is in the right region can be computed.

\begin{lem}\label{lem:orb_Schnyder}
Let~$\bq$ be a rooted simple quasi-triangulation of external face degree $j\geq 3$. Let $\Orb$ be an orbit of~$\cQ(\bq)$. Then, for every internal face~$f$ of~$\bq$, there is a proportion $3/j$ of elements in~$\Orb$ for which~$f$ is in the right region.
\end{lem}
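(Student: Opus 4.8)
The statement is the Schnyder-wood analogue of Lemma~\ref{lem:orbit}, so I would mirror its proof structure, replacing ``$e$ is boundary-reaching'' by ``$f$ is in the right region''. Fix an orbit $\Orb$ and set $a\de|\Orb|/j$. For an internal face $f$ of $\bq$, call the \emph{multiplicity} of $f$ the number of elements of $\Orb$ for which $f$ lies in the right region; the goal is to show every internal face has multiplicity $3a$. As in the bipolar case, I would first establish the claim for a well-chosen ``base'' family of faces for which it is transparent, then propagate face-by-face through a local exchange argument.

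\textbf{Step 1: base case.} The natural base faces are the $j$ internal faces incident to the external contour — more precisely, the face $f_i$ lying just inside the external edge $\vec\rho_i$ (for quasi-triangulations these are well defined since the external contour is simple). When the root edge is $\vec\rho_i$, the separating path $P$ starts at the tip of $\vec\rho_i$ and the face just to its left/right along the contour is determined; tracking which of the three root positions $\vec\rho_{i-2},\vec\rho_{i-1},\vec\rho_i$ put $f_i$ on the right of $P$ should show that exactly $3$ out of each block of $j$ consecutive root positions do so (this is where the ``$3$'' enters, versus ``$2$'' for bipolar maps — the separating path now has a single outgoing edge \emph{on its right} at each vertex, so the right region ``thickens'' by one more layer relative to the edge-on-$P$ boundary). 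Hence each $f_i$ has multiplicity $3a$.

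\textbf{Step 2: propagation.} I would show that two internal faces $f$, $f'$ sharing an edge $e$ have the same multiplicity; since the dual of $\bq$ restricted to internal faces is connected, this plus Step~1 finishes the proof. Fix such a pair, with common edge $e$. Call $\bx\in\Orb$ \emph{special} (for the pair $f,f'$) if exactly one of $f,f'$ is in the right region of $\bx$; equivalently, $e$ lies on the separating path $P$ of $\bx$ — say with $f$ on its right and $f'$ on its left, or vice versa. For non-special $\bx$, the rerooting operator $\sigma$ reverts $P$ but, since $e\notin P$ and $\bx$ is not special, $e$'s position relative to the new separating path and the right/left status of $f,f'$ relative to each other are unchanged in the relevant sense; the delicate point is that I must track not the absolute status of $f$ but the relation ``$f$ on same side as $f'$'', which is preserved unless $\bx$ is special. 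For special $\bx$, exactly as in the two cases (Top/Bottom) of Figure~\ref{figorbit_prop}, passing to $\sigma(\bx)$ toggles whether $f$ or $f'$ is the one on the right of the new separating path. Monitoring the side of $e$ along the cyclic orbit, the special elements alternate between ``$e$ enters $P$ from the $f$-side'' and ``from the $f'$-side'', so they split evenly; combined with the non-special elements contributing equally to both, $f$ and $f'$ get the same multiplicity.

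\textbf{Expected main obstacle.} The crux is the local analysis at the shared edge $e$ and, in particular, making precise what replaces the corner/extremal-lateral dichotomy of Lemma~\ref{lem:orbit}. In the bipolar setting the separating path is characterized by a leftmost-incoming-edge rule at vertices; here the characterization of $P$ is via a single-outgoing-edge-on-the-right rule, and the relevant ``flip'' performed by $\sigma$ is the reversal of $P$ together with the reorientation of two external edges. I will need to verify carefully that: (i) reverting $P$ and reorienting those external edges indeed moves the separating path exactly to the ``next layer'', so the right region loses precisely the faces adjacent to $P$ on its right and $P$ is re-pinned one step inward; and (ii) for an edge $e$ of an internal face not on $P$, the two sub-cases of a special element are exactly the two depicted local pictures and they genuinely alternate along $\Orb$. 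Once (i)–(ii) are nailed down, the counting ($3$ out of $j$ in the base case, even split of special elements in propagation) is routine, entirely parallel to the proof of Lemma~\ref{lem:orbit}.
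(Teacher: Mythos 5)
Your overall architecture mirrors the paper's: a base case for internal faces incident to the external contour, then propagation to adjacent faces via the ``special'' elements where the shared edge lies on the separating path, which alternate in type along the orbit. Your Step~2 is essentially the paper's argument and is fine. The genuine gap is in Step~1. Writing $e_i$ for the nonoriented edge underlying $\vec\rho_i$ and $f_i$ for the internal face incident to it, the three elements of a block of $j$ consecutive orbit elements for which $f_i$ lies in the right region are \emph{not} the ones rooted at $\vec\rho_{i-2},\vec\rho_{i-1},\vec\rho_i$. Two of them are indeed those rooted at $\vec\rho_i$ and $\vec\rho_{i-1}$ (there $e_i$ is one of the two external edges bounding the right region), but the third occurs at a position that varies with the orientation, not at the fixed root $\vec\rho_{i-2}$. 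Concretely, when the root is $\vec\rho_{i-2}$, the separating path starts at the common vertex $z$ of $e_{i-1}$ and $e_i$ with the outgoing edge of $z$ not leading to the root head; that edge may perfectly well be internal (with $e_i$ incoming at $z$), in which case $e_i$ is not on the separating path and $f_i$ sits in the \emph{left} region. So the statement you propose to ``track'' is false as stated, and the heuristic that the right region ``thickens by one more layer'' does not supply a correct mechanism.

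What the paper does instead, and what is missing from your plan, is to follow the \emph{orientation} of $e_i$ along the block $\bx_0,\dots,\bx_{j-1}$ with $\bx_0$ rooted at $\vec\rho_i$: $e_i$ is oriented clockwise in $\bx_1$ (the rerooting operator orients the discarded root clockwise) and counterclockwise in $\bx_{j-1}$ (its endpoint is then a root extremity, of outdegree $0$), so there is a first index $p$ at which it switches; at that step $e_i$ is reverted by $\sigma$, hence lies on the separating path, necessarily with $f_i$ on its right since the external face is on its left; once counterclockwise, it cannot belong to the separating path again before becoming the root (planarity plus simplicity of the separating path); and for roots other than $\vec\rho_i,\vec\rho_{i-1}$, having $f_i$ in the right region forces $e_i$ to be on the separating path. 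This pins down exactly three elements $\bx_0$, $\bx_p$, $\bx_{j-1}$ per block, with $p$ depending on the orbit element. Without this orientation-flip argument (or some substitute for it), your base case does not go through, and the propagation step has nothing correct to propagate from.
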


\begin{proof}
For a fixed orbit $\Orb$, the \emph{multiplicity} of an internal face $f\in\bq$ is the number of elements in $\Orb$ such that~$f$ is in the right region. Letting $a=|\Orb|/j$ we thus have to prove that every internal face of~$\bq$ has multiplicity~$3a$. 

We proceed in two steps similarly as in Lemma~\ref{lem:orbit}, proving first the property for any internal face~$f$ adjacent to the external face and then propagating it to any internal face. So let~$f$ be incident to an external edge~$\vec\rho_i$ (and possibly a second external edge). Precisely, we will show that for any~$j$ consecutive elements~$\bx_0$, \ldots,~$\bx_{j-1}$ of~$\Orb$, with~$\bx_0$ rooted at~$\vec\rho_i$, there are exactly~$3$ elements such that~$f$ is in the right region; the situation is illustrated in Figure~\ref{border_face}, where~$e$ is the nonoriented edge corresponding to~$\vec\rho_i$.

\begin{figure}[ht!]
	\centering\includegraphics[width=.95\linewidth]{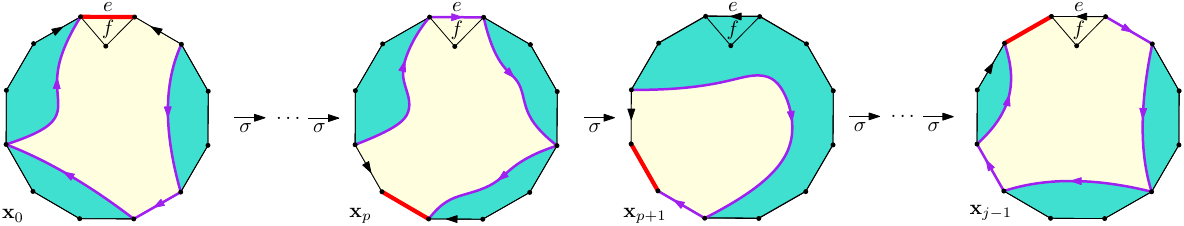}
	\caption{The situation in the proof of Lemma~\ref{lem:orb_Schnyder} for an internal face~$f$ incident to an external edge~$e$. For each depicted element, the root edge is bold-red, the separating path is purple, and the right and left regions are respectively light yellow and turquoise.}
	\label{border_face}
\end{figure}

Clearly,~$f$ is in the right region in~$\bx_0$ and in~$\bx_{j-1}$. Note that~$e$ is directed clockwise around the outer contour in~$\bx_1$, and counterclockwise in~$\bx_{j-1}$. Hence, there is a smallest $p\in\{1,\ldots,j-2\}$ such that~$e$ is clockwise in~$\bx_{p}$ and counterclockwise in~$\bx_{p+1}$. Since~$e$ is reverted when applying~$\sigma$ to~$\bx_{p}$, this means that~$e$ is on the separating path for~$\bx_p$. And necessarily, $f$ is on its right, hence in the right region.

Once~$e$ becomes counterclockwise (starting in~$\bx_{p+1}$), it has no chance of belonging to the separating path (and thus being reverted) before~$e$ becomes the root edge again. This comes from planarity and the fact that the separating path is a simple path. Thus~$\bx_p$ is the only element among~$\bx_1$, \ldots, $\bx_{j-2}$ for which~$e$ is on the separating path.

Note finally that, in each element of~$\Orb$ not rooted either at~$\vec\rho_i$ or at~$\vec\rho_{i-1}$, the only possibility for~$f$ to be in the right region is that~$e$ is on the separating path with~$f$ on its right. Among~$\bx_1$, \ldots, $\bx_{j-2}$, this only occurs for~$\bx_p$. Hence, among~$\bx_0$, \ldots, $\bx_{j-1}$, there are exactly~$3$ elements -- $\bx_0$, $\bx_p$, $\bx_{j-1}$ -- for which~$f$ is in the right region, as desired. 

\bigskip
Next, we show that two internal faces~$f$, $f'$ sharing an edge~$e$ have the same multiplicity. We call \emph{special} of type~$\mathrm{L/R}$ (resp.~$\mathrm{R/L}$) the elements of $\Orb$ for which~$f$ is in the left region while~$f'$ is in the right region (resp.~$f$ is in the right region while~$f'$ is in the left region), that is, $e$ is on the separating path, with~$f$ on the left (resp.\ on the right). Hence, for a special element in~$\Orb$, the next special element along the orbit is of the other type (these are the elements where~$e$ is flipped). This ensures that~$f$, $f'$ have same multiplicity. 

We conclude that any internal face~$f$ has multiplicity~$3a$, since there exists a sequence~$f_0$, \ldots, $f_k$ of internal faces ending at~$f$, such that~$f_0$ is incident to an external edge, and~$f_i$, $f_{i+1}$ share an edge for each $i\in\{0,\ldots,k-1\}$. 
\end{proof}
\begin{rem}
Similarly to Remark~\ref{rem:homomesy}, Lemma~\ref{lem:orb_Schnyder} is an instance of  homomesy: the indicator function that~$f$ is in the right region is $\frac{3}{j}$-mesic on~$\cQ(\bq)$ under the rerooting operator. 

And, similarly to Remark~\ref{rem:corresp}, for each internal face~$f$, one can design a $j$-to-$3$ correspondence between the elements of~$\Orb$ and those where~$f$ is in the right region. The correspondence is easy to describe for~$f$ incident to an external edge, and can then be propagated from face to (adjacent) face.  
\end{rem}

\begin{corol}\label{coro:good_proportion_Schnyder}
Let~$\bq$ be a rooted quasi-triangulation of external face degree $j\geq 3$. Then, for any internal face~$f$ of~$\bq$, there is a proportion $3/j$ of elements in $\cQ(\bq)$ for which~$f$ is in the right region.
\end{corol}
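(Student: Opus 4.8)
The plan is to obtain this as an immediate consequence of Lemma~\ref{lem:orb_Schnyder} via an averaging-over-orbits argument, exactly parallel to the way Corollary~\ref{coro:good_proportion} follows from Lemma~\ref{lem:orbit}. The starting observation is that, as noted in the ``Orbits'' paragraph, the rerooting operator $\sigma$ is a bijection from the finite set $\cQ(\bq)$ to itself; hence $\cQ(\bq)$ is the disjoint union of its $\sigma$-orbits, say $\cQ(\bq)=\Orb_1\sqcup\dots\sqcup\Orb_m$, with each orbit of length a multiple of $j$, say $|\Orb_t|=j\,a_t$.

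The second step is to apply Lemma~\ref{lem:orb_Schnyder} to each orbit $\Orb_t$ individually: for the fixed internal face $f$, the number of elements of $\Orb_t$ for which $f$ lies in the right region is exactly the multiplicity $3\,a_t=\tfrac{3}{j}\,|\Orb_t|$. Summing over $t\in\{1,\dots,m\}$, the number of elements of $\cQ(\bq)$ for which $f$ is in the right region is $\sum_{t=1}^{m}\tfrac{3}{j}\,|\Orb_t|=\tfrac{3}{j}\,|\cQ(\bq)|$, which is precisely the claimed proportion.

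I do not expect any genuine obstacle here, since all the combinatorial work is already contained in Lemma~\ref{lem:orb_Schnyder}; the corollary is a bookkeeping statement. The only points deserving a word of care are that the partition of $\cQ(\bq)$ into $\sigma$-orbits is legitimate (which holds because $\sigma$ is a bijection, established earlier) and that writing the per-orbit count as $3a_t$ requires $|\Orb_t|$ to be a multiple of $j$ (also already recorded). Optionally, one could phrase the conclusion in the stronger ``$j$-to-$3$ correspondence'' form alluded to in the remark following the lemma, by assembling the per-orbit correspondences into a single one on $\cQ(\bq)$, but the counting version above already suffices for the application to Equation~\eqref{eqratios}.
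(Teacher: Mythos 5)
Your proposal is correct and is exactly the intended argument: the paper treats this corollary as an immediate consequence of Lemma~\ref{lem:orb_Schnyder}, obtained by decomposing $\cQ(\bq)$ into orbits of the bijection $\sigma$ and applying the lemma orbit by orbit, just as Corollary~\ref{coro:good_proportion} follows from Lemma~\ref{lem:orbit}. No difference in approach and no gap.
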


\begin{proof}[Proof of Proposition~\ref{proppb} Equation~\eqref{eqratios}]
Let $\cQ_{k,j}$ be the set of quasi-3-oriented quasi-triangulations with external face degree~$j$ having~$k$ internal vertices, and let $\cQf_{k,j}$ (resp.\ $\cQL_{k,j}$) be the set of pairs $(\bx,f)$ where $\bx\in\cQ_{k,j}$ and~$f$ is an internal face (resp.\ an internal face in the left region for~$\bx$). 
Then $j\,|\cQf_{k,j}|$ (resp.\ $j\,|\cQL_{k,j}|$) is the total number of triples $(\bq,\bx,f)$ where~$\bq$ 
is a rooted simple quasi-triangulation with~$k$ internal vertices and~$j$ external vertices, $\bx\in\cQ(\bq)$, 
and~$f$ is an internal face (resp.\ an internal face in the left region for~$\bx$). 
Corollary~\ref{coro:good_proportion_Schnyder} then ensures that
\begin{equation}\label{ratiocQLf}
j\,|\cQL_{k,j}|=\Big(1-\frac{3}{j}\Big)j\,|\cQf_{k,j}|\,.
\end{equation}

It remains to come back to bipolar oriented maps through the bijections we used. First of all, recall that~$\cS_{k,j}$ corresponds to~$\cQ_{k,j}$ via Schnyder woods, that is, through the bijection depicted in Figure~\ref{schnydbij} and then the ones in the top row of Figure~\ref{triangulation2}. Next, for an element $\bq\in\cQ_{k,j}$ corresponding to $\m\in\cS_{k,j}$, the following holds.

\begin{figure}[ht!]
	\centering\includegraphics[width=.9\linewidth]{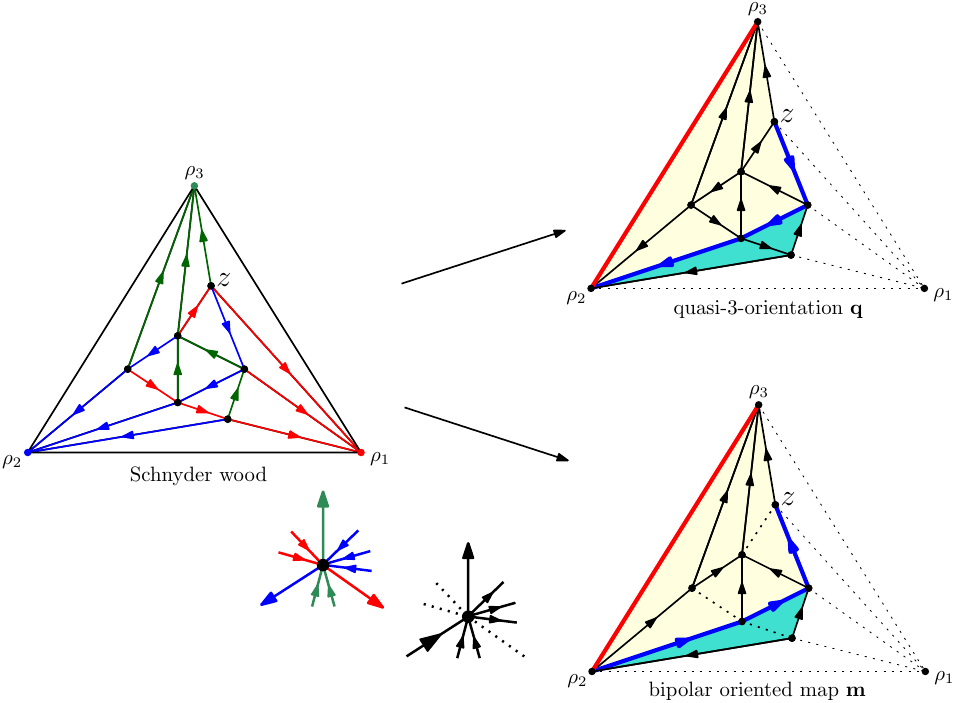}
	\caption{Illustration of the bijective correspondence between $\cQ_{k,j}$ (top right) and $\cS_{k,j}$ (bottom right) via Schnyder woods. For $\bq\in\cQ_{k,j}$ and the associated $\m\in\cS_{k,j}$, the separating paths coincide (in reverse directions). Hence, in the 1-to-1 correspondence between internal faces of~$\bq$ and right-internal edges of~$\m$, the internal faces in the left region correspond to the right-internal edges that are boundary-reaching.}
	\label{triangulation3}
\end{figure}

\begin{itemize}
	\item Each internal face~$f$ of~$\bq$ bijectively corresponds to a right-internal edge~$e$ of~$\m$: the face~$f$ is the triangular face to the right of~$e$ upon superimposing~$\bq$ and~$\m$. See Figure~\ref{schnydbij}. 
	\item The separating paths of~$\bq$ and of~$\m$ coincide, with opposite edge directions. Indeed, the outgoing blue edge of an internal vertex $v$ in the Schnyder wood corresponds to the leftmost incoming blue edge of $v$ in the bipolar oriented map. See Figure~\ref{triangulation3}.
	\item Hence, internal faces in the left region for~$\bq$ correspond to right-internal edges that are boundary-reaching for~$\m$. 
\end{itemize}

From these observations, the bijection between $\cQ_{k,j}$ and $\cS_{k,j}$ yields a bijection between  $\cQf_{k,j}$ and $\cSr_{k,j}$, which specializes into a bijection between $\cQL_{k,j}$ and $\cSbr_{k,j}$. As a result,
\[
\sigma_{k,j}=\frac{|\cSbr_{k,j}|}{|\cSr_{k,j}|}=\frac{|\cQL_{k,j}|}{|\cQf_{k,j}|}=1-\frac{3}{j},
\]
where the last equality is given by~\eqref{ratiocQLf}.
\end{proof}

\section{Base cases of the counting formulas}\label{secbase}

Since our bijections provide proofs to~\eqref{eqt}, \eqref{eqb}, \eqref{eqs}, we can recover~\eqref{nbt}, \eqref{nbb}, \eqref{nbs}, using the following base cases. Clearly, $T_{0,2}=1$.

\begin{prop}
For $j\geq 3$, the sets $\cT_{0,j}$ and $\cS_{0,j}$ are in bijection with rooted binary trees having $j-1$ leaves, so that
\[
T_{0,j}=S_{0,j}=\Cat_{j-2}=\frac{(2j-4)!}{(j-1)!(j-2)!}.
\]		
For $\ell\geq 1$, $j\geq 2$, the set $\cB_{0,\ell,j}$ is in bijection with rooted plane trees having $\ell$ nodes and $j-1$ leaves. These are counted by Narayana numbers:
\[
B_{0,\ell,j}=\Nar_{\ell,j-1}=\frac{(\ell+j-2)!(\ell+j-3)!}{\ell!(\ell-1)!(j-1)!(j-2)!}.
\]
\end{prop}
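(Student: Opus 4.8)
The plan is to establish the two claimed combinatorial bijections and then simply evaluate the resulting counts, which are classical. For the first statement, I would note that $\cT_{0,j}$ (resp.\ $\cS_{0,j}$) consists of bipolar oriented quasi-triangulations (resp.\ bipolar oriented maps with all internal faces of right length~$2$) with no internal vertex and external face of degree~$j$. With no internal vertices, every vertex lies on the boundary, the right outer boundary is a single directed edge tree-like path, and the left outer boundary together with the internal structure encodes a full binary tree: each internal face, being a triangle with the pole-to-pole right side of length~$1$ (resp.\ right length~$2$), corresponds to a binary node, and the $j-1$ edges along the left boundary correspond to the $j-1$ leaves. Concretely, I would describe the map as being obtained from a rooted binary tree with $j-1$ leaves by the standard "closure" that glues the contour into a bipolar oriented quasi-triangulation; the inverse is peeling the faces from the root. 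The acyclicity and local rules from Section~\ref{secmifa} guarantee this is well defined and bijective. Since the number of rooted binary trees with $j-1$ leaves is $\Cat_{j-2}$, we get $T_{0,j}=S_{0,j}=\Cat_{j-2}=\frac{(2j-4)!}{(j-1)!(j-2)!}$.

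For the second statement, I would similarly argue that an element of $\cB_{0,\ell,j}$ — a bipolar oriented map with no internal vertex, $\ell$ internal faces, external degree~$j$ — is determined by a rooted plane tree. With no internal vertices, all faces are "lens-shaped": each internal face has its left and right lateral paths running between two boundary vertices, and reading the map along its contour (equivalently, contracting the right outer boundary path) turns the nested face structure into a rooted plane tree whose $\ell$ internal nodes are the $\ell$ internal faces and whose $j-1$ leaves sit along the left boundary. The inverse map reconstructs the bipolar oriented map from the tree by the analogous closure operation, reversing the decomposition; the local rules are automatically satisfied because there are no internal vertices to check. This gives a bijection with rooted plane trees having $\ell$ nodes and $j-1$ leaves, which are counted by the Narayana number $\Nar_{\ell,j-1}=\frac{1}{\ell}\binom{\ell}{j-1}\binom{\ell-1}{j-2}=\frac{(\ell+j-2)!(\ell+j-3)!}{\ell!(\ell-1)!(j-1)!(j-2)!}$.

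The main obstacle I anticipate is pinning down the bijection cleanly: in both cases one must be careful about how the external root edge, the two poles, and the degree-$j$ external face translate to the root, root edge, and leaves of the tree, and one must verify that the "closure" of a tree genuinely produces a map satisfying \emph{all} the local rules (in particular the face-degree constraint for quasi-triangulations, and the right-length-$2$ constraint in the Schnyder case). Once the correspondence of parameters is fixed, well-definedness and bijectivity are routine because the absence of internal vertices removes every nontrivial local condition. The final numerical step — recognizing $|\{\text{binary trees with }j-1\text{ leaves}\}| = \Cat_{j-2}$ and $|\{\text{plane trees with }\ell\text{ nodes},\, j-1\text{ leaves}\}| = \Nar_{\ell,j-1}$, and rewriting in factorial form — is standard and needs no real work.
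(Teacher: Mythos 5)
Your overall route is the same as the paper's: with no internal vertices every vertex lies on the external face, the orientation carries no extra information, and the resulting dissection is dual to a binary tree (resp.\ a plane tree with $\ell$ nodes and $j-1$ leaves), giving $\Cat_{j-2}$ and $\Nar_{\ell,j-1}$. The genuine gap is in your treatment of $\cS_{0,j}$. You handle it in parallel with $\cT_{0,j}$ by asserting that each internal face ``corresponds to a binary node'', but an element of $\cS_{0,j}$ is only required to have all internal faces of right length~$2$: a priori such a face could have a long left lateral path, hence degree larger than~$3$, and then the dual object would not be a binary tree with $j-1$ leaves, so the count $\Cat_{j-2}$ would not follow. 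What is needed---and what the paper proves---is that a bipolar oriented map has no internal vertices if and only if every internal face has left length~$1$: if a face~$f$ had left lateral path of length at least~$2$, an intermediate vertex~$v$ of that path would have~$f$ as the internal face at its right, which forces~$v$ to be internal. This yields $\cS_{0,j}=\cT_{0,j}$, and only then does the binary-tree correspondence apply to the Schnyder case. Your sentence ``the absence of internal vertices removes every nontrivial local condition'' is precisely where this argument is skipped; it is also symptomatic that your description of the faces is reversed (in a quasi-triangulation with no internal vertices it is the \emph{left} lateral path that has length~$1$ and the right length that is~$2$, and these lateral paths join the two extreme vertices of the face, not the poles of the map).

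A second, lighter omission: since you state the bijection between trees and \emph{oriented} maps, mutual inversibility requires knowing that the bipolar orientation of such a map is determined by its underlying unoriented dissection, so that the closure of the peeled tree returns the original orientation. The paper makes this explicit: the rule for the right outer boundary orients the non-root external edges along the contour, and acyclicity then forces every chord to be oriented from its earlier to its later endpoint in that order. You only gesture at this (``acyclicity and the local rules guarantee\ldots''); the argument is short, but it should be written out. With these two points supplied, the rest of your sketch coincides with the paper's proof.
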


\begin{proof}
We call \emph{dissection} a rooted planar 2-connected map~$\m$ where all the vertices are incident to the external face. We claim that any dissection admits a unique bipolar orientation. Indeed, if we order the vertices around the external contour in counterclockwise order as~$v_0$, \dots, $v_{j-1}$ with~$v_0$ the origin of the root edge, then the condition for the right external boundary of bipolar oriented maps ensures that every edge $\{v_p,v_{p+1}\}$ has to be directed from~$v_p$ to~$v_{p+1}$ for $0\leq p\leq j-2$. And the acyclicity implies that every internal edge $\{v_p,v_q\}$, with $p<q$, has to be directed from~$v_p$ to~$v_q$. Hence, dissections are in bijection with bipolar oriented maps with no internal vertices.
 
In particular $\cT_{0,j}$ is in bijection with dissections of external face degree~$j$ and internal faces of degree $3$, and $\cB_{0,\ell,j}$ is in bijection with dissections with external face degree~$j$ and $\ell$ internal faces. By duality, these respectively correspond to rooted binary trees with $j-1$ leaves, and to rooted plane trees with~$\ell$ nodes and $j-1$ leaves, which are respectively counted by $\Cat_{j-2}$ and by~$\Nar_{\ell,j-1}$. 

\bigskip
In order to deal with $\cS_{0,j}$, it is convenient to observe that a bipolar oriented map has no internal vertices if and only if every internal face has left length~$1$. Indeed, if there is an internal vertex~$v$, then the internal face~$f$ on the right of the rightmost outgoing edge of~$v$ has its left lateral path passing by~$v$, hence of length at least~$2$; and if an internal face~$f$ has its left lateral path of length at least~$2$ then this path has to visit a vertex~$v$ having~$f$ on its right, so that~$v$ is necessarily an internal vertex.

As a consequence, for bipolar oriented maps with no internal vertex, having all internal faces of degree~$3$ is the same as having all internal faces of right length~$2$. Hence, $\cT_{0,j}=\cS_{0,j}$.
\end{proof}

\section{Concluding remarks}\label{sec:conclusion}

\subsection{Growth bijections for the base cases}

\paragraph{Catalan.}
Note that R\'emy's  bijection~\cite{Rem85}, which interprets the identity~\eqref{remeq}, yields a growth bijection for maps in~$\cT_{0,j}$ through the bijection via dissections from the previous section. Hence, there is a complete growth process for maps in $\cT_{k,j}$: use R\'emy's bijection to grow from $\varnothing$ to $\cT_{0,k+j}$, then use our bijection (Corollary~\ref{corssst}) $k$ times to grow from $\cT_{0,k+j}$ to $\cT_{k,j}$. The same holds for $\cS_{k,j}$.

\paragraph{Narayana.}
Regarding $\cB_{0,\ell,j}$, we have not been able to find in the literature a growth bijection for structures counted by Narayana numbers. Recall that $\Nar_{a,b}$ counts rooted plane trees with~$a$ nodes and~$b$ leaves, or, alternatively, rooted binary trees with~$a$ left and~$b$ right leaves.

For the sake of completeness, we briefly present such a bijection here. One easily obtains from the formula for $\Nar_{a,b}$ the following simple identity
\begin{equation}\label{eqNar}
(a+1)\,a\ \Nar_{a+1,b}=(b+1)\,b\ \Nar_{a,b+1},
\end{equation}
which, together with the base case $\Nar_{1,b}=1$ yields the formula for $\Nar_{a,b}$. Our bijection for~\eqref{eqNar} relies on cutting/joining operations similarly to~\cite{BeMo14,Mar22ckert}. We find it more convenient to describe it on rooted binary trees counted by left and right leaves (though it can also be described for rooted plane trees counted by nodes and leaves). 

For a rooted binary tree, we call \emph{inner edges} those connecting two nodes, and \emph{leaf edges} those incident to a leaf. An edge is called \emph{left} or \emph{right} according to whether it connects a parent to its left child or to its right child. In this terminology, $\Nar_{a,b}$ counts rooted binary trees with $a$ left leaf edges and $b$ right leaf edges. Such trees also have $a-1$ right inner edges and $b-1$ left inner edges. Hence, if we let $\cF_{a,b}$ (resp.\ $\cG_{a,b}$) be the set of rooted binary trees with $a$ left leaf edges, $b$ right leaf edges, and having a marked left leaf edge and a marked right inner edge (resp.\ a marked right leaf edge and a marked left inner edge), then~\eqref{eqNar} reads $|\cF_{a+1,b}| = |\cG_{a,b+1}|$.

The bijection between $\cF_{a+1,b}$ and $\cG_{a,b+1}$ is actually the specialization of an involution~$\chi$ on the set of rooted binary trees with a marked leaf edge of some type (left or right) and a marked inner edge of the other type. It is illustrated in Figure~\ref{Naraya} and proceeds as follows. We cut the (red) marked inner edge and attach to the (blue) marked leaf edge the cut tree that does not contain the marked leaf edge. We keep the markings in the process.

\begin{figure}[ht!]
	\centering\includegraphics[width=.9\linewidth]{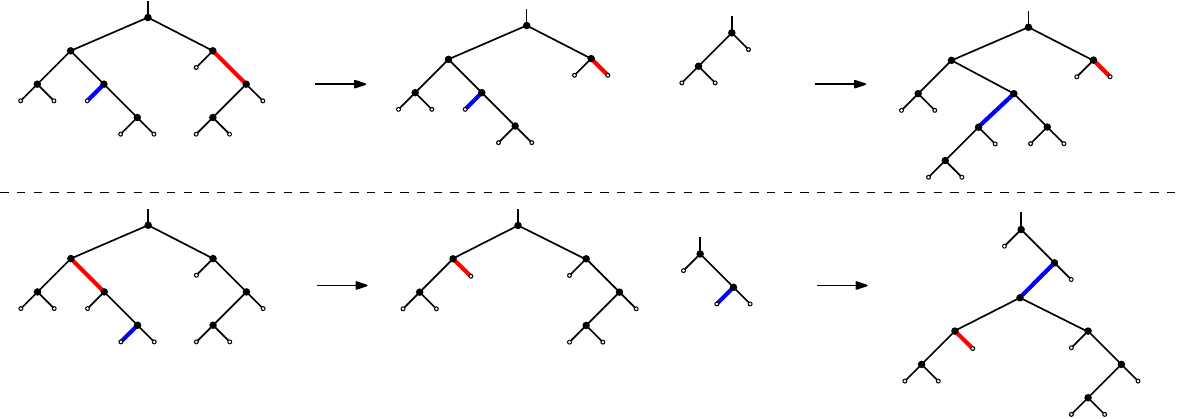}
	\caption{The involution~$\chi$ interpreting~\eqref{eqNar}. In the intermediate state, the two marked leaf edges may be in the same tree (top row), or in the 2 trees (bottom row). In this example, the original tree has~$5$ left leaf edges and~$4$ right leaf edges. Observe that the final tree has one less left leaf edges and one more right leaf edges, and that the types of the markings are reversed.}
	\label{Naraya}
\end{figure}

Observe that, in the intermediate state, we obtain a pair of rooted binary trees with a marked leaf edge of each types. In the end, the number of leaf edges of the type of the marked inner edge increases by one (in the cutting step), while the number of leaf edges of the type of the marked leaf edge decreases by one (in the attachment step). All in all, $\chi$ specializes into inverse bijections between $\cF_{a+1,b}$ and~$\cG_{a,b+1}$.

\bigskip
This provides the following complete growth process for $\cB_{k,\ell,j}$.
\begin{enumerate}
	\item Start from right comb binary tree having~$1$ right leaf and $k+\ell+j-2$ left leaves.
	\item Apply $k+j-2$ times the mapping~$\chi$ (each time increasing the number of right leaves) and obtain a binary tree having $k+j-1$ right leaves and~$\ell$ left leaves.
	\item Take the corresponding element in~$\cB_{0,\ell,k+j}$ through the classical bijections.
	\item Grow from $\cB_{0,\ell,k+j}$ to $\cB_{k,\ell,j}$ using our bijection~$k$ times (Corollary~\ref{corsssbs}).
\end{enumerate}

\subsection{Tableau interpretation of the identities}\label{sec:tableau}
\newcommand{\sdots}{\raisebox{-.09em}{$\cdot\hspace{-0.3em}\cdot\hspace{-0.3em}\cdot$}}

\paragraph{Hook formulas.}
Recall that the number of standard Young tableaux of given shape~$\lambda$ is given by the so-called \emph{hook length formula} (see e.g.\ \cite[Corollary~7.21.6]{Sta99}):
\begin{equation}\label{hlf}
\frac{n!}{\prod_{x\in\lambda} h_x},
\end{equation}
where~$n$ is the number of cells in~$\lambda$ and~$h_x$ is the \emph{hook length} of the cell~$x$, that is, the number of cells to its right in the same row or below it in the same column.

Recall also that the number of semistandard Young tableaux of shape~$\lambda$ whose entries are bounded by some~$K$ is given by the so-called \emph{hook content formula} (see e.g.\ \cite[Theorem~7.21.2]{Sta99}):
\begin{equation}\label{hcf}
\prod_{x\in\lambda} \frac{K+c_x}{h_x},
\end{equation}
where $c_x\de b-a$ is the \emph{content} of the cell~$x=(a,b)$.

\paragraph{Quasi-triangulations.}
It is known~\cite{Bor17,KeMiShWi19} that $\cT_{k,j}$ is in bijection with the set $\cY_{k,j}$ of standard Young tableaux of shape 
\[
\lambda_{k,j}=(k+j-2,k+j-2,k) =
\begin{tabular}{l}
$\overbrace{\hspace{5.92em}}^{k+j-2}$\\[-.5ex]
\begin{ytableau}
~ & ~ & \none[\sdots] & ~ & ~ & \none[\sdots] & ~ \\
~ & ~ & \none[\sdots] & ~ & ~ & \none[\sdots] & ~ \\
~ & ~ & \none[\sdots] & ~       
\end{ytableau}\\[-1.5ex]
$\underbrace{\hspace{3.42em}}_{k}$
\end{tabular},
\]
which directly yields~\eqref{nbt} via the hook length formula~\eqref{hlf}.

Identity~\eqref{eqt} is also easy to see. Indeed, $\lambda_{k-1,j+1}$ is $\lambda_{k,j}$ with the last entry in the third row deleted. The deletion lets all entries preserve their hook length except those in the third row and $k$-th column, where the hook length decreases by~$1$. Hence, setting $n=3k+2j-4$, Equation~\eqref{eqt} follows from the hook length formula~\eqref{hlf}:
\[
\frac{|\cY_{k,j}|}{|\cY_{k-1,j+1}|}=n\,\frac{\prod_{x\in\lambda_{k-1,j+1}}h_x}{\prod_{x\in\lambda_{k,j}}h_x}=n\frac1{k}\frac{j-1}{j+1}\,.
\]  

From this fact, it is also possible to derive an alternative bijective proof of~\eqref{eqt} via tableaux. Indeed, using the notation~$\wh{a}$ for the integer set $\{1,\dots,a\}$, the bijective proof of the hook length formula~\cite{NoPaSt97}  yields a bijection
\[
\bigg(\prod_{x\in\lambda_{k,j}}\wh{h_x}\bigg)\times\cY_{k,j}\simeq \wh{n}\times\bigg(\prod_{x\in\lambda_{k-1,j+1}}\wh{h_x}\bigg)\times\cY_{k-1,j+1}.
\]
Letting~$E$ be the multiset of integers formed by the hook lengths of the entries in $\lambda_{k,j}$, except for the first entry in the third row and the first entry in the $k$-th column, we thus have
\[
\Big(\prod_{a\in E}\wh{a}\Big)\times\wh{j+1}\times\wh{k}\times\cY_{k,j}\simeq \Big(\prod_{a\in E}\wh{a}\Big)\times\wh{j-1}\times\wh{n}\times\cY_{k-1,j+1}.
\]
We set $N\de\prod_{a\in E}a$. We thus obtain an $N$-to-$N$ correspondence between 
$\wh{j+1}\times\wh{k}\times\cY_{k,j}$ and $\wh{j-1}\times\wh{n}\times\cY_{k-1,j+1}$. Hence we also have an $N$-to-$N$ correspondence between
$\wh{j+1}\times\wh{k}\times\cT_{k,j}$ and $\wh{j-1}\times\wh{n}\times\cT_{k-1,j+1}$. Via Hall's marriage theorem, a 1-to-1 correspondence can be extracted from the $N$-to-$N$ correspondence, but without an explicit description (this kind of argument was previously used in bijective constructions, e.g.\ in~\cite[Theorem~5]{ChFeFu13}). 

\begin{rem}
As the bijective proof of the hook length formula -- and similarly that of the hook content formula -- proceeds via jeu-de-taquin operations, it seems unlikely that our bijective correspondence between $\wh{j+1}\times\wh{k}\times\cT_{k,j}$ and $\wh{j-1}\times\wh{n}\times\cT_{k-1,j+1}$ (see Remark~\ref{rem:corresp}) would have a simple link with the one between $\wh{j+1}\times\wh{k}\times\cY_{k,j}$ and $\wh{j-1}\times\wh{n}\times\cY_{k-1,j+1}$, if we relate the two correspondences via the known bijections~\cite{Bor17,KeMiShWi19} between $\cT_{k,j}$ and~$\cY_{k,j}$.
\end{rem}

\paragraph{General bipolar oriented maps.}
Regarding bipolar oriented maps counted by vertices and faces, it is known~\cite{FuPoSc09,AlPo15} that $\cB_{k,j,\ell}$ is in bijection with noncrossing triples of lattice walks with steps in $\{\mathrm{E},\mathrm{N}\}$, starting at the origin and ending at $(k+j-2,\ell-1)$, such that the upper walk ends with $NE^{j-2}$ for $\ell\geq 2$, and is equal to~$E^{j-2}$ for $\ell=1$. These are classically in bijection with the set $\cZ_{k,\ell,j}$ of semistandard Young tableaux on~$\lambda_{k,j}$ with entries bounded by $\ell+1$; Equation~\eqref{nbb} is thus the hook content formula~\eqref{hcf}. Furthermore, comparing~$\lambda_{k,j}$ with $\lambda_{k-1,j+1}$, the hook lengths differ as above; regarding contents, they are all the same except for the unique cell of $\lambda_{k,j}\setminus\lambda_{k-1,j+1}$, whose content is $k-3$. Letting $m=k+\ell-2$, Equation~\eqref{eqb} thus follows from the hook content formula~\eqref{hcf}:
\[
\frac{|\cZ_{k,\ell,j}|}{|\cZ_{k-1,\ell,j+1}|}= m\,\frac{\prod_{x\in\lambda_{k-1,j+1}}h_x}{\prod_{x\in\lambda_{k,j}}h_x}=m\frac1{k}\frac{j-1}{j+1}\,.
\] 
Then the bijective proof~\cite{Kra99} of the hook content formula yields an $N$-to-$N$ correspondence between $\wh{j+1}\times\wh{k}\times\cZ_{k,\ell,j}$ and $\wh{j-1}\times\wh{m}\times\cZ_{k-1,\ell,j+1}$, hence an $N$-to-$N$ correspondence between $\wh{j+1}\times\wh{k}\times\cB_{k,\ell,j}$ and $\wh{j-1}\times\wh{m}\times\cB_{k-1,\ell,j+1}$, from which a (non-explicit) 1-to-1 correspondence can be extracted via Hall's marriage theorem as above. 

%\begin{rem}
%From Remark~\ref{rem:corresp}, our bijective proof is arguably more explicit than the above one through tableaux where one needs to invoke Hall's theorem.
%\end{rem}

\paragraph{Schnyder woods.}
Finally, regarding Schnyder woods, the bijection in~\cite{BeBo09} ensures that $S_{k,j}$ is the number of noncrossing pairs of lattice walks with steps in $\{\mathrm{E},\mathrm{N}\}$, starting at the origin and ending at $(k+j-2,k+j-2)$, staying above the diagonal, and such that the upper walk ends with $NE^{j-2}$. A simple application of the Lindstr\"om--Gessel--Viennot lemma then gives~\eqref{nbs}. These correspond to semistandard Young tableaux of shape
\[
(k+j-2,k) =
\begin{tabular}{l}
$\overbrace{\hspace{5.92em}}^{k+j-2}$\\[-.5ex]
\begin{ytableau}
~ & ~ & \none[\sdots] & ~ & ~ & \none[\sdots] & ~ \\
~ & ~ & \none[\sdots] & ~       
\end{ytableau}\\[-1.5ex]
$\underbrace{\hspace{3.42em}}_{k}$
\end{tabular},
\]
with entries bounded by $k+j-2$ and such that the entries in column~$i$ are at least~$i$. It would be interesting to have a bijective proof of~\eqref{eqs} via such tableaux as above. Alternatively, one could also expect a bijective proof via the symplectic tableaux that are obtained by a slightly different correspondence, as described in~\cite[Section~4]{KrGuVi00}.

\subsection{Random generation}\label{sec:random}

For the sake of completeness, we now discuss random generators obtained by our slit-slide-sew bijections. As we will see, the complexity, while polynomial, is quite high due to rejection induced by the boundary-reaching constraint. We discuss in details the case of $\cT_{k,j}$. 
Let $\Gamma T_{k,j}$ be the random sampler on~$\cT_{k,j}$ specified as follows.
\begin{itemize}
\item
If $k=0$, draw a uniformly chosen random binary tree with $j-2$ nodes (using e.g.\ R{\'e}my's procedure) and return the oriented map in~$\cT_{0,j}$ associated with it. 
\item
If $k\geq 1$, repeat:
\begin{itemize}
	\item $\tilde\m\leftarrow\Gamma T_{k-1,j+1}$
	\item $e\leftarrow$ uniformly chosen random edge in $\tilde\m$
\end{itemize}
until~$e$  is boundary-reaching. Open~$e$ into a face~$f$ of degree~$2$, and let~$\tilde e$ be the representative of~$e$ having~$f$ on its right. Let $(\m,v)=\Psi(\tilde\m,\tilde e)$. Return~$\m$. 
\end{itemize}
It is easily checked by induction on~$k$ that $\Gamma T_{k,j}$ is a uniform random sampler on~$\cT_{k,j}$. Let $\Lambda T_{k,j}$ be its expected complexity. Since the cost of computing the rightmost path from a given vertex in $\tilde\m\in\cT_{k-1,j+1}$ is at most $k-1$, we have, for $k\geq 1$ and $j\geq 2$,
\[
\Lambda T_{k,j}\leq \Lambda T_{k-1,j+1}+(k-1)+\frac{2}{j+1}\Lambda T_{k,j}.
\]
Hence,
\begin{align*}
\Lambda T_{k,j}&\leq \frac{j+1}{j-1}\Big(k-1+\Lambda T_{k-1,j+1}\Big)\,,\\
\shortintertext{so that, after iterating,}
\Lambda T_{k,j}&\leq \sum_{m=1}^k \frac{(j+m-1)(j+m)}{(j-1)j}(k-m) + \frac{(j+k-1)(j+k)}{(j-1)j}\Lambda T_{0,j+k}\\
\shortintertext{hence, bounding each summand by $\dfrac{(j+k-1)(j+k)}{(j-1)j}k$, we obtain}
\Lambda T_{k,j}&\leq  \frac{(j+k-1)(j+k)}{(j-1)j}\Big(k^2 + \Lambda T_{0,k+j}\Big)\,.
\end{align*}
Finally, $\Lambda T_{k,j}=\cO\Big((1+k/j)^2(k^2+j) \Big)$. Note that the complexity is linear only in the regime $k=\cO(\sqrt{j})$, while for $j=\cO(1)$ it is of order~$\cO(k^4)$. 

\begin{rem}
It would be possible to avoid rejection (the repeat loop step) if we had access to a $(j+1)$-to-$(j-1)$ correspondence between the elements of~$\cTe_{k-1,j+1}$ and the elements of~$\cTbe_{k-1,j+1}$. Such a correspondence is provided by Remark~\ref{rem:corresp}. However, it does not seem effective as it requires propagation of bijections starting from an external edge.       
\end{rem}

Similar uniform random samplers can be obtained for $\cB_{k,\ell,j}$ and $\cS_{k,j}$, of respective complexities $\cO\Big((1+k/j)^2(k^2+\ell+j) \Big)$ and $\cO\Big((1+k/j)^3(k^2+j) \Big)$. 

More efficient alternatives are provided by the recursive method of sampling, see e.g.~\cite{BoMo03,BaBu19}. With no rejection involved, they only necessitate simple (rational in terms of the parameters) formulas for the ratio of adjacent coefficients, without actually requiring a positive bijective proof of such formulas. Also, they usually necessitate one more parameter. For instance, letting $\cY_{a,b,c}$ be the set of standard Young tableaux on the 3-line diagram $(a,b,c)$, we have 
\[
\cY_{a,b,c}\simeq \mathbf{1}_{a>b}\cY_{a-1,b,c} \cup \mathbf{1}_{b>c}\cY_{a,b-1,c}\cup \mathbf{1}_{c>0}\cY_{a,b,c-1}.
\]
The induced recursive random sampler $\Gamma Y_{a,b,c}$ thus chooses whether to call  
$\Gamma Y_{a-1,b,c}$ or~$\Gamma Y_{a,b-1,c}$ or~$\Gamma Y_{a,b,c-1}$ with probabilities given by the ratios of the counting coefficients, which are simple rational expressions in~$a$, $b$, $c$, due to massive cancellations of the hook lengths. This gives a linear-time random sampler for~$\cY_{a,b,c}$ and thus for $\cY_{k,j}\simeq\cT_{k,j}$.

%\phantomsection
%\addcontentsline{toc}{section}{Often used notation}
%\label{secnot}
%\printnomenclature[25mm]

\bibliographystyle{alpha}
\bibliography{main}

%\listoftodos
\end{document}